\newtheorem{theorem}{Theorem}[section]
\newtheorem{lemma}[theorem]{Lemma}
\newtheorem{prop}[theorem]{Proposition}
\newtheorem{corollary}[theorem]{Corollary}
\theoremstyle{definition}
\newtheorem{definition}[theorem]{Definition}
\newtheorem{remark}[theorem]{Remark}
\newtheorem{example}[theorem]{Example}
\newtheorem{conjecture}[theorem]{Conjecture}
\numberwithin{equation}{section}
\newcommand{\N}{\mathbb{N}}
\newcommand{\B}{\mathcal{B}}
\newcommand{\aap}{\mathrm {Ap}}
\newcommand{\ord}{\mathrm {ord}}
\newcommand{\gr}{\mathrm{gr}}
\newcommand{\m}{\mathfrak{m}}
\newcommand{\Hom}{\mathrm {Hom}}
\newcommand{\Ann}{\mathrm {Ann}}
\begin{document}

%\newpage

%\tableofcontents

%\newpage

\title{Lefschetz Properties of Gorenstein Graded Algebras associated to the Ap\'ery Set of a Numerical Semigroup}

\author{Lorenzo Guerrieri %\footnotemark[1]
\thanks{Universit\`a di Catania, Dipartimento di Matematica e
Informatica, Viale A. Doria, 6, 95125 Catania, Italy.   \textbf{Email address:} guelor@guelan.com  }}

\maketitle

%\begin{center}
%%\today
%\end{center}

\begin{abstract}
\noindent In this paper we study the Weak Lefschetz property of  two classes of standard graded Artinian Gorenstein algebras associated in a natural way to the Ap\'ery set of numerical semigroups. To this aim we also prove a general result about the transfer of the Weak Lefschetz property from an Artinian Gorenstein algebra to its quotients modulo a colon ideal.
\medskip

\noindent MSC: 13A30; 13E10; 13H10. \\
\noindent Keywords: Lefschetz properties, Graded rings, Artinian rings, Gorenstein rings, Numerical semigroups.
\end{abstract}

%Let $A$ be a standard graded Artinian Gorenstein algebra that satisfies the Weak Lefschetz property. In this article we prove that a quotient of $A$ with respect to a colon ideal of a linear element, in the case in which it has the same codimension of $A$, it also satisfies the Weak Lefschetz property. We apply this result to prove that two classes of graded algebras, that we associate in a natural way to the Ap\'ery sets of numerical semigroups, satisfy the Weak Lefschetz property.

%\chapter{}
\section*{Introduction}

The Lefschetz properties for standard graded Artinian $K$-algebras are algebraic concepts introduced by Stanley in \cite{stanley}, motivated by the Hard Lefschetz Theorem on the cohomology rings of smooth irreducible complex projective varieties. %endowed with the euclidean topology, see for example the survey [La] for the theorem and [Ru] for an overview. 
The notion of Poincar\'{e} duality for these cohomology rings inspired the definition of Poincar\'{e} duality for algebras which is equivalent to the Gorensteiness. Hence many results about Lefschetz properties have been proved in the Gorenstein case. \\
 In \cite{dilw}, it has been shown that almost all Artinian Gorenstein algebras have the Strong Lefschetz property. But in general it is a difficult problem to know whether a given specific algebra has the Strong (or the Weak) Lefschetz property. \\
Using Macaulay-Matlis duality in characteristic zero it is possible to present Artinian Gorenstein algebras in the form $A = \frac{Q}{\Ann_Q(F)}$ with $F \in R = K[x_1,\ldots, x_n]$ a homogeneous polynomial and $Q = K[X_1,\ldots, X_N]$ where $X_i:= \frac{\partial}{\partial x_i}$ are differential operators (e.g. \cite{maewat}).
In \cite{wa2} and \cite{maewat}, using this presentation of the algebras, the autors introduced a criterion based on determinants of Higher Hessians that establishes whether an algebra has or not Lefschetz properties. \\
In \cite{rodrigo} this criterion was used to construct explicit examples of Artinian Gorenstein algebras that do not satisfy one or both Lefschetz properties. \\
Even if the "Lefschetz properties problem" has a very simple formulation, it is in general open even in low codimension. Indeed, while in codimension two it is known that all the Artinian Gorenstein graded algebras have the Strong Lefschetz property, in codimension three this is not known but is conjectured to be true. The first examples of algebras without Strong or Weak Lefschetz properties appear in codimension four. \\
Furthermore, we do not know if there are examples of algebras without one or both the Lefschetz properties (in any given codimension) that belong to the smaller class of Complete Intersection rings. Indeed it is conjectured that all the Complete Intersection Artinian graded algebras have the SLP. For all these results and open conjectures we refer to the monography \textit{The Lefschetz properties}  \cite{monografia}.

In this work we study the Weak Lefschetz property (WLP) for a class of graded Artinian Gorenstein algebras built up starting from the Ap\'ery Set of a numerical semigroup and of which they reflect the lattice structure. Our goal is to study whether these algebras have the WLP in codimension three and in the Complete Intersection case. In both cases we are going to give a criterion implying the WLP under some extra assumption.
The structure of this paper is the following: in Section 1 we recall some definitions and known results about Lefschetz properties. \\ %and we focus on the Hessians criteria proved in \cite{maewat}. \\
In Section 2 we prove a key theorem (\ref{main1}) stating that, if a graded Artinian Gorenstein algebra $$A \cong \dfrac{K[x_1,\ldots, x_n]}{I}$$ has the WLP and some conditions on the socle degree or on the Hilbert function of $G$ are fulfilled, then also the quotient ring $$ \dfrac{A}{(0:_A L)} $$ is Gorenstein and it has the WLP for any linear element $L \in A$. \\ %for all $l=1, \ldots, n$. \\
In general it is known that if an Artinian graded algebra $A$, not necessarily Gorenstein, has the SLP and if $L \in A$ is a Lefschetz element for $A$ (see definition in Section 1), then $ \dfrac{A}{(0:_A L)} $ has the SLP; moreover, if $A$ has the SLP in the narrow sense (see Definition \ref{deflef}), then for any linear element $L_1 \in A$, the quotient ring $ \dfrac{A}{(0:_A L_1)} $ has the SLP if it has the same Hilbert function as $ \dfrac{A}{(0:_A L)} $ \cite[3.11 and 3.40]{monografia}.
%Since it does not seem to be easy to compare Hilbert functions of such general rings, we decided to prove  the paticular case of Theorem(\ref{main1}) directly using standard linear algebra methods. \\
We have looked here for a similar result about WLP under the hypothesis of Gorensteiness of $A$.  %in an explicit way using standard linear algebra methods without making any assumption on the nature of the linear element $L$. \\

In Section 3 we construct the graded Artinian algebra associated to the Ap\'ery Set of a numerical semigroup. This is the same ring that appeared in \cite{bryant} and in \cite{da-mi-sa} and that was used to prove results about the Gorensteiness and the Complete Intersection property of the associated graded ring of a semigroup ring. We study here how to present these algebras as quotients of the algebra of differential operators by the annihilator of a homogeneous polynomial, following the known result by Maeno and Watanabe \cite[Theorem 2.1]{maewat}. At the end of this section we discuss how our Theorem \ref{main1} can be applied to this class of rings when they satisfy a \it Quotient Condition\rm, stating that they are expressible as quotients of an algebra $G$ satisfying the WLP by the ideal $ (0:_A x^C) $,  for a linear monomial $x \in G$ and an integer $C \geq 0$. \\
In Section 4 and 5 we show that the algebras associated to Ap\'ery Sets of numerical semigroups in codimension three and in the Complete Intersection case satisfy this Quotient Condition. In Section 4 we deal with the Complete Intersection case and we recall results from \cite{da-mi-sa} to establish when our algebras associated to 
Ap\'ery Sets are Complete Intersections. Then we use an useful known criterion about WLP for Complete Intersection rings to get our result. \\
Finally, in Section 5 we assume the codimension to be three; in this case we are able to completely characterize the defining ideal of all the graded algebras associated to Ap\'ery Sets in function of their socle degree and we find that any such algebra is of the form $$ A= \dfrac{K[y, z, w]}{I} = \dfrac{G}{(0:_G z^C)} $$ with $G$ a Complete Intersection Artinian graded ring and $C$ a positive integer. %This characterization will imply the WLP of $A$ as an easy consequence of Theorem \ref{main1}.

%\newpage
\bigskip

\section{Lefschetz Properties}

We start recalling definitions and important results about the Lefschetz properties. 

Let $K$ be a field of characteristic zero and let $A= \bigoplus_{i = 0}^D A_i$ be a standard graded Artinian $K$-algebra. Since it is Artinian, $A$ is a finite dimensional $K$-vector space.  

Consider the polynomial ring in $n$ variables $K[x_1,\ldots, x_n]$ with $n \geq 1$. We can always write $$ A \cong \dfrac{K[x_1,\ldots, x_n]}{I} $$ where $I \subseteq K[x_1,\ldots, x_n]$ is an homogeneous ideal of height $n$. Since $A$ is Artinian, the integer $n=\dim_K(A_1)$ is the \textit{codimension} of the ring $A$.

\bigskip

%We say that:

\begin{definition} \label{deflef}
We say that: 
\begin{itemize}
 \item{$A$ has the Weak Lefschetz property (WLP) if there is an element $L \in A_1$ such that the multiplication map $\times L : A_i \to A_{i+1} $ has maximal rank for every $i=0, \ldots, D-1$.}
 \item{$A$ has the Strong Lefschetz property (SLP) if there is an element $L \in A_1$ such that the multiplication map $\times L^d : A_i \to A_{i+d} $ has maximal rank for every $i=0, \ldots, D$ and $d=0, \ldots, D-i$.} 
 \item{$A$ has the Strong Lefschetz property in the narrow sense if there is an element $L \in A_1$ such that the multiplication map $\times L^{D-2i} : A_i \to A_{D-i} $ is bijective for every $i=0, \ldots, \lfloor \frac{D}{2} \rfloor$.}
\end{itemize}
\end{definition}

A linear form $L \in A_1$ such that each map $\times L : A_i \to A_{i+1} $ has maximal rank is called a \textit{Weak Lefschetz element}. If instead each map $\times L^d : A_i \to A_{i+d} $ has maximal rank, $L$ is called a \textit{Strong Lefschetz element}. 

\bigskip

The ring $A=\bigoplus_{i = 0}^D A_i$ is Gorenstein if there is a perfect pairing of its homogeneous components, that is $A_i \cong A_{D-i}$ for every $i$.

Hence, if $A$ is Gorenstein, it has a symmetric Hilbert function, that means: $$  \dim_K(A_i)= \dim_K(A_{D-i}), \hspace{0.5cm} \forall i. $$
We call the integer $D$ the \textit{socle degree} of $A$.

In this work we are always dealing with Gorenstein algebras and in this case it is known that the SLP is equivalent to the Strong Lefschetz property in the narrow sense \cite{maewat}. 

The Artinian ring $A\cong \dfrac{K[x_1,\ldots, x_n]}{I}$ is a Complete Intersection (CI) if $I$ is minimally generated by exactly $n$ elements (notice that in general $I$ is generated by at least $n$ elements). It is well known that a Complete Intersection ring is always Gorenstein.
Here we list some known results and still open problems about Lefschetz properties of Gorenstein rings. \\
All the details about these topics can be found in \cite[3.15, 3.48, 3.35, 3.46, 3.80]{monografia}. \\

Let $A\cong \dfrac{K[x_1,\ldots, x_n]}{I}$ be a standard graded Artinian Gorenstein $K$-algebra, then:

\begin{itemize}
 \item{If $n=2$, then $A$ has the SLP.}
 \item{If $n=3$ and $A$ is a CI, then $A$ has the WLP.}
 \item{For every $n$, if $A$ is a CI and $I$ is a monomial ideal, then $A$ has the SLP.} 
 \item{It is conjectured that if $A$ is a CI, then $A$ has the SLP.}
 \item{If $n=3$, it is unknown whether there exists a such ring $A$ that does not have the WLP or the SLP.} 
 \end{itemize}

Important tools needed to study whether a Gorenstein algebra has the Lefschetz properties are the higher Hessians. \\
We give now some definitions and results taken from a paper by Maeno and Watanabe \cite{maewat} and from a more recent paper by Gondim and Zappal\'{a} \cite{rodzap}.

\begin{prop}
\label{weaky}
Let $A$ be a standard graded Artinian Gorenstein $K$-algebra and $k:= \lfloor \frac{D}{2} \rfloor $ where $D$ is the socle degree of $A$. Thus we have: 
\begin{enumerate}
\item
If $D$ is an odd number, $A$ has the WLP if there is an element $L \in A_1$ such that the multiplication map $\times L : A_k \to A_{k+1} $ is an isomorphism.
\item
If $D$ is an even number, $A$ has the WLP if there is an element $L \in A_1$ such that the multiplication map $\times L : A_k \to A_{k+1} $ is surjective or equivalently the multiplication map $\times L : A_{k-1} \to A_{k} $ is injective.
\end{enumerate}
\end{prop}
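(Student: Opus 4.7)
The plan is to combine two structural facts with the symmetry of the Hilbert function. \emph{First,} Gorenstein duality gives perfect pairings $A_i\times A_{D-i}\to A_D\cong K$ which satisfy $\langle La,b\rangle=\langle a,Lb\rangle$ by associativity of the product in $A$; thus $\times L\colon A_i\to A_{i+1}$ is the transpose of $\times L\colon A_{D-i-1}\to A_{D-i}$, so these two maps have equal rank, and in particular
$$
\times L\colon A_i\to A_{i+1}\ \text{is surjective}\ \iff\ \times L\colon A_{D-i-1}\to A_{D-i}\ \text{is injective}.
$$
\emph{Second,} if $\times L\colon A_i\to A_{i+1}$ is surjective then $(A/(L))_{i+1}=0$; since $A/(L)$ is standard graded, we get $(A/(L))_j\subseteq(A/(L))_{i+1}\cdot(A/(L))_{j-i-1}=0$ for every $j\geq i+1$, so $\times L\colon A_{j-1}\to A_j$ is surjective for every $j\geq i+1$.

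Granting these, both cases reduce to bookkeeping. If $D=2k+1$, Gorenstein symmetry gives $\dim_K A_k=\dim_K A_{k+1}$, so the assumed isomorphism at degree $k$ is simultaneously surjective and injective. Propagating surjectivity upward via the second tool yields a surjection at every degree $\geq k$; dualizing via the first tool (and noting that $i\leq k$ implies $D-i-1\geq k$) gives an injection at every degree $\leq k$. Since an injective $\times L\colon A_i\to A_{i+1}$ automatically forces $\dim_K A_i\leq \dim_K A_{i+1}$ (and dually for surjections), each such map in fact has maximal rank, establishing the WLP. If $D=2k$, the duality immediately identifies the two stated hypotheses: take $i=k-1$, so that $D-i-1=k$ and $D-i=k+1$. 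Assuming surjectivity at degree $k$, the same propagation and dualization give surjectivity for every $j\geq k$ and injectivity for every $j\leq k-1$, covering all indices $0,\dots,D-1$.

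The main obstacle, and the step I would write up most carefully, is the first tool: one must check that the Gorenstein socle pairings genuinely make multiplication by $L$ self-adjoint, so that the transpose identification and the surjective--injective equivalence are rigorous. Once that and the elementary propagation lemma are in hand, the proof reduces to verifying that the single stated central hypothesis simultaneously controls the upper and the lower halves of the Hilbert function of $A$.
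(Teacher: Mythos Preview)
Your argument is correct, and it is precisely the standard proof of this well-known fact. Note, however, that the paper does not supply its own proof of this proposition: it is stated without proof as background from the Lefschetz properties literature (see, e.g., the monograph \cite{monografia}). So there is nothing in the paper to compare against.

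For what it is worth, the two structural tools you isolate---self-adjointness of multiplication by $L$ with respect to the Gorenstein socle pairing $A_i\times A_{D-i}\to A_D\cong K$, and upward propagation of surjectivity via the vanishing of $(A/(L))_j$ for $j>i$ once $(A/(L))_{i+1}=0$---are exactly the ingredients of the customary proof, and your bookkeeping in the odd and even cases is accurate. The only cosmetic remark is that in the odd case you need not invoke the extra observation about dimensions at the end: once you have injectivity on $A_i\to A_{i+1}$ for $i\le k$ and surjectivity for $i\ge k$, maximal rank is immediate by definition.
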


%\bigskip Q/\Ann_Q(F)
We define the ring of differential operators $Q:=K[X_1,\ldots,X_n]$ where $$X_i:=\dfrac{\partial}{\partial x_i},$$ and we state a well known result about the representation of a standard graded Artinian Gorenstein $K$-algebra as a quotient of the ring $Q$ modulo the annihilator of a polynomial in $K[x_1,\ldots, x_n]$. This result follows in general from the theory of inverse system \cite{BH}, but we refer explicitly to \cite{maewat} where is given a more direct proof of it.

\begin{theorem} \textup{\cite[Theorem 2.1]{maewat}} 
\label{annq}
Let $A$ be a standard graded Artinian Gorenstein $K$-algebra. Then there exists a polynomial $F \in K[x_1,\ldots, x_n]$ such that $A$ is isomorphic to the quotient $$ \frac{Q}{\Ann_Q(F)}. $$ 
\end{theorem}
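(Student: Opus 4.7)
The plan is to use Macaulay's theory of inverse systems. Define the apolarity pairing $Q_d \times R_d \to K$ by
\[
\langle g, f \rangle := (g \circ f)(0),
\]
where $\circ$ denotes the action of a differential operator on a polynomial and the result is evaluated at the origin. On monomials this sends $\langle X^\alpha, x^\beta \rangle$ to $\alpha!\,\delta_{\alpha,\beta}$; since $\mathrm{char}\,K = 0$ the factorials are invertible, so the pairing is perfect in each degree.

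Write $A = Q/J$, where $J$ is the image of $I$ under the identification $x_i \leftrightarrow X_i$, and introduce the graded $Q$-submodule
\[
M := J^{\perp} = \{\,f \in R : g \circ f = 0 \text{ for every } g \in J\,\} \subseteq R.
\]
The apolarity pairing restricts to a perfect pairing $A_d \times M_d \to K$, compatible with the $Q$-actions in the sense that $\langle q\,a, f \rangle = \langle a, q \circ f \rangle$ for $q \in Q$, $a \in A$, $f \in M$. In particular $\dim_K M_d = \dim_K A_d$, and $M$ is finite-dimensional since $A$ is Artinian.

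Now the Gorenstein hypothesis enters. The socle $(0:_A Q_+)$ is one-dimensional and concentrated in the top degree $D$; the compatibility of the pairing transfers this property to the dual object, so $M/(Q_+ \circ M)$ is concentrated in degree $D$ and is one-dimensional. The graded Nakayama lemma then yields that $M$ is a cyclic $Q$-module, generated by a single homogeneous element $F \in M_D$, that is, $M = Q \circ F$.

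Finally one checks that $J = \mathrm{Ann}_Q(F)$. If $g \circ F = 0$ then for any $h \in Q$ one has $g \circ (h \circ F) = (gh) \circ F = h \circ (g \circ F) = 0$, so $g$ annihilates all of $Q \circ F = M$; perfectness of the pairing $A \times M \to K$ then forces $g \in J$. The reverse inclusion is immediate since $J \circ M = 0$ by construction and $F \in M$. The key conceptual step, and the main obstacle, is establishing the equivariance of the pairing cleanly and with it the duality between the socle of $A$ and the minimal generators of $M$; once that is in place, everything else is formal.
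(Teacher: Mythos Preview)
Your argument is correct and is the standard Macaulay inverse-systems proof. The paper does not prove this theorem itself but cites Maeno--Watanabe; the version of their argument recounted later (in the proof of Theorem~\ref{Fnum}) is essentially the same duality, packaged slightly differently: rather than forming $M=J^{\perp}$ and invoking Nakayama to see it is cyclic, they use the Gorenstein self-duality $A\cong\Hom_K(A,K)$, embed into $\Hom_K(Q,K)\cong K[[x_1,\dots,x_n]]$ via the map $\theta$ dual to $Q\twoheadrightarrow A$, and set $F=\theta(1)$ explicitly. Your $M$ is exactly the image of $\theta$, and your generator $F\in M_D$ is (up to scalar) their $\theta(1)$. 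The only practical difference is that their formulation hands you a concrete formula for $F$, which the paper exploits in Theorem~\ref{Fnum} to compute $F$ for the Ap\'ery-set algebras; your abstract argument would need one extra line to recover that formula.
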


This classical result shows that $A$ is generated over $K$ exactly by the monomials in $K[x_1,\ldots, x_n]$ that do not annihilate $F$ when considered as differential operators.

\begin{definition}
\label{hes}
Let $F$ be a polynomial in $K[x_1,\ldots, x_n]$ and $d \geq 1$ an integer. Take a $K$-linear basis $\B_d= \lbrace \alpha_i  \rbrace_{i = 1}^s$ of $A_d$. \\
We define the $d$-th Hessian of $F$ as the matrix $$  Hess^{d}_{\B_d}(F):= \{ ( \alpha_i(X)\alpha_j(X)F(x))_{i,j=1}^{s}\}. $$ We call $ hess^{d}_{\B_d}(F) $ the determinant of this matrix.  The singularity of the matrix is independent of the chosen basis and hence we can write simply $ Hess^{d}(F) $ and $ hess^{d}(F) $. Clearly $hess^{d}(F)$ is a polynomial in $K[x_1,\ldots, x_n]$.
\end{definition}

%\bigskip

\begin{definition}
\label{mixhes}
Let $F$ be a polynomial in $K[x_1,\ldots, x_n]$. \\
Taking two integers $d,t \geq 1$ and two bases of $A_d$ and $A_t$, we define the mixed Hessians of the polynomial $F$ as $$ Hess^{d,t}(F):= \{ ( \alpha_i(X)\beta_j(X)F(x))\} $$ where $ \{ \alpha_i \}_{i=1}^{s_1} $ and $ \{ \beta_j \}_{j=1}^{s_2} $ form respectively the basis of $A_d$ and $A_t$.
\end{definition}

%We call $Hess^{[D/2]}_{\B_d}(F)$ the maximal (highest) Hessian.
%\bigskip

\begin{theorem} \textup{\cite[Theorem 2.10]{rodzap}}
\label{hess}
Let $A$ be a standard graded Artinian Gorenstein $K$-algebra and $k:=\lfloor \frac{D}{2} \rfloor$ where $D$ is the socle degree of $A$. Thus we have: 
\begin{enumerate}
\item
The algebra $A=\frac{Q}{\Ann_Q(F)}$ has the SLP if and only if all the Hessians $Hess^{d}(F)$ for $d=1, \ldots, k$, have maximal rank (hence if they have nonzero determinant). 
Moreover, a linear form $L= \sum a_i x_i \in A_1$ is a Strong Lefschetz element if $F(a_1, \ldots, a_n) \neq 0$ and $hess^{d}(F)(a_1, \ldots, a_n)$ is nonzero for all $d$.
\item
If $D$ is an odd number, the algebra $A=\frac{Q}{\Ann_Q(F)}$ has the WLP if and only if the maximal Hessian $Hess^{k}(F)$ has nonzero determinant. Moreover, a linear form $L= \sum a_i x_i \in A_1$ is a Weak Lefschetz element if $F(a_1, \ldots, a_n) \neq 0$ and $hess^{k}(F)(a_1, \ldots, a_n)$ is nonzero.
\item
If $D$ is an even number, the algebra $A=\frac{Q}{\Ann_Q(F)}$ has the WLP if and only if the mixed Hessian $Hess^{k-1, k}(F)$ has maximal rank. Moreover, a linear form $L= \sum a_i x_i \in A_1$ is a Weak Lefschetz element if $F(a_1, \ldots, a_n) \neq 0$ and the matrix $Hess^{k-1, k}(F)(a_1, \ldots, a_n)$ has maximal rank.
\end{enumerate}
\end{theorem}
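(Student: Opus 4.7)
The plan is to read all three statements off a single bilinear form computation. By Theorem \ref{annq} and Gorensteinness the pairing
$$A_d \times A_{D-d} \longrightarrow K, \qquad (\alpha,\beta) \longmapsto (\alpha\beta)F$$
is perfect (the right hand side is a scalar because $\alpha\beta$ is a differential operator of degree $D$ applied to a polynomial of degree $D$). The second ingredient is the elementary identity
$$L^m(P) = m!\, P(a_1,\ldots,a_n)$$
valid for any linear form $L=\sum a_i X_i$ viewed as a differential operator and any homogeneous polynomial $P$ of degree $m$; it follows immediately from the multinomial expansion.

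For part (1), rewrite $\times L^{D-2d}: A_d \to A_{D-d}$ through the pairing as the symmetric bilinear form on $A_d$ sending $(\alpha_i,\alpha_j)$ to $(\alpha_i\alpha_j L^{D-2d})F = L^{D-2d}(\alpha_i\alpha_j F)$. Since $\alpha_i\alpha_j F$ is homogeneous of degree $D-2d$, the identity above evaluates this to $(D-2d)!\,(\alpha_i\alpha_j F)(a_1,\ldots,a_n)$. So in the basis $\{\alpha_i\}$ of $A_d$, the Gram matrix of this form is $(D-2d)!$ times $Hess^d(F)(a_1,\ldots,a_n)$, and $\times L^{D-2d}$ is an isomorphism iff this determinant is nonzero. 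The $d=0$ case is exactly the condition $F(a_1,\ldots,a_n)\neq 0$, and SLP corresponds to bijectivity of these maps for all $d=0,\ldots,k$ (the range $d>k$ being handled by Gorenstein duality). Because $K$ is infinite, each $hess^d(F)$ being a nonzero polynomial is equivalent to the simultaneous nonvanishing on a Zariski open set of points $(a_1,\ldots,a_n)$, which gives both the criterion and the Lefschetz element description. Part (2) is then the special case $d=k$, $D-2d=1$, combined with Proposition \ref{weaky}.

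Part (3) requires a small twist because for $D=2k$ Proposition \ref{weaky} reduces WLP to the maximal rank of $\times L : A_{k-1}\to A_k$, and $A_{k-1}$ is Gorenstein-dual to $A_{k+1}$ rather than to $A_k$. The key observation is that the self-pairing $A_k\times A_k\to K$ is still perfect because $D-k=k$, hence induces an isomorphism $\phi:A_k\to A_k^*$. The composition $\phi\circ(\times L):A_{k-1}\to A_k^*$ sends $\alpha \mapsto \bigl(\beta \mapsto (\alpha\beta L)F\bigr) = \bigl(\beta \mapsto 1!\,(\alpha\beta F)(a_1,\ldots,a_n)\bigr)$, whose matrix in fixed bases of $A_{k-1}$ and $A_k$ is exactly $Hess^{k-1,k}(F)(a_1,\ldots,a_n)$; since $\phi$ is an isomorphism, this matrix has maximal rank iff $\times L$ does. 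The main obstacle is spotting the correct pairing in this asymmetric situation; once that is in place the "moreover" clauses are automatic from the open, generic nature of the Hessian nonvanishing conditions over a field of characteristic zero.
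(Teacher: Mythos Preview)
The paper does not give its own proof of this statement; it is quoted from \cite[Theorem~2.10]{rodzap} (with part (1) already in \cite{maewat}), so there is nothing in the paper to compare against. Your argument is correct and is essentially the standard one from those references: the identity $L^{m}(P)=m!\,P(a_1,\ldots,a_n)$ together with the perfect Gorenstein pairing $A_d\times A_{D-d}\to K$ exhibits the matrix of $\times L^{D-2d}$ (respectively of $\times L$ composed with the self-pairing on $A_k$) as a nonzero scalar multiple of the evaluated Hessian (respectively mixed Hessian), and the Zariski-open nature of the nonvanishing locus over an infinite field converts the pointwise statement into the generic one.
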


%(1)  \\
%(2)  \\
%(3) 
%\bigskip

%The preceding results follow from some facts that we want to recall:

%\begin{remark}
%\label{vt}
%Let $L=\sum a_i x_i \in A_1$ a linear element of $A$. \\
%Then the matrix $Hess^{d}(F)(a_1x_1, \ldots, a_nx_n)$ is the symmetric matrix associated to the map $\times L^{D-2d} : A_d \to A_{D-d} $ and hence the first assertion of Theorem \ref{hess} follows from the fact that, in the Gorenstein case, SLP is equivalent to SLP in the narrow sense. \\
%In particular, if the socle degree $D$ is odd, $Hess^{k}(F)(a_1x_1, \ldots, a_nx_n)$ is the matrix associated to the map $\times L : A_k \to A_{k+1} $. \\
%If instead $D$ is even, we have that $Hess^{k-1,k}(F)(a_1x_1, \ldots, a_nx_n)$ is the matrix associated to the map $\times L : A_k \to A_{k+1} $, while the matrix $ ^tHess^{k-1,k}(F)(a_1x_1, \ldots, a_nx_n)$ is associated to the map $\times L : A_{k-1} \to A_{k} $.
 %\end{remark}

\bigskip
%\bigskip

\section{WLP of quotient algebras}

Let $G= \bigoplus_{i = 0}^D G_i$ be a standard graded Gorenstein Artinian $K$-algebra that has the WLP. \\
We prove a theorem that, under some assumption on the Hilbert function of $G$, will allow us to transfer the Weak Lefschetz Property to some of the quotients of $G$. Also in this section $K$ is a field of characteristic zero.

%\begin{lemma}
%\label{lefel2} 
%Let $G= \bigoplus_{i \geq 0}^D G_i$ be a standard graded Gorenstein Artinian $K$-algebra that has the WLP. \\
%Then, it is always possible to find a Weak Lefschetz element $L = \sum_{j=1}^{n} a_j x_j \in G_1$ with $a_j \neq 0$ for all $j$. 
%\end{lemma}

%\begin{proof}
%Let $k=\lfloor \frac{D}{2} \rfloor$ and write $G \cong Q/\Ann_Q(F)$ as in Theorem \ref{annq}. \\
%Let $H$ be $Hess^{k}(F)$ if $D$ is odd or a square submatrix of maximal order of $Hess^{k-1, k}(F)$ if $D$ is even. Such matrix $H$ exists by Theorem \ref{hess} since $G$ has the WLP. \\
%In Theorem \ref{hess} is also proved that $L$ is a Weak Lefschetz element if and only if  det$(H)(a_1, \ldots, a_n) \neq 0$. Since $G$ has the WLP, det$(H)$ is a not constant polynomial in $K[x_1, \ldots, x_n]$. Hence there exist points $(a_1, \ldots, a_n) \in \mathbb{A}^{n}_{K}$ such that $a_j \neq 0$ for all $j$ and $ \det(H)(a_1, \ldots, a_n) \neq 0$.
%\end{proof} 

%\bigskip

\begin{theorem}
\label{main1} 
Let $G= \bigoplus_{i = 0}^D G_i \cong \dfrac{K[x_1,\ldots, x_n]}{I} $ be a standard graded Gorenstein Artinian $K$-algebra that satisfies the WLP. \\
Then, for $l=1, \ldots, n$, the quotient ring $$ A = \dfrac{G}{(0:_G x_l)} $$ is also a standard graded Gorenstein Artinian $K$-algebra. Assume that $A$ and $G$ have the same codimension and let $ k:=\lfloor \frac{D}{2} \rfloor $. Then:
\begin{enumerate}
\item If the socle degree $D$ of $G$ is odd, then also $A$ has the WLP.
\item If the socle degree $D$ of $G$ is even and $\dim_K(G_{k-1})=\dim_K(G_k)$, then also $A$ has the WLP.
\end{enumerate}
\end{theorem}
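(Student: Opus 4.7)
The strategy is to identify $A$ with the principal ideal $x_l G \subseteq G$ as a graded $G$-module, translating the multiplication maps in $A$ into restrictions of multiplication maps in $G$, and then to apply the middle-degree criterion of Proposition~\ref{weaky}. First, since $G$ is Artinian Gorenstein, its socle $G_D$ is one-dimensional. The $G$-linear surjection $G(-1) \to x_l G$ has kernel $(0:_G x_l)$, giving an isomorphism $A \cong x_l G$ of graded $G$-modules up to a degree shift. As a nonzero submodule of $G$, the image $x_l G$ must contain $G_D$, so its socle is exactly $G_D$; hence $A$ is Gorenstein of socle degree $D-1$. The codimension hypothesis amounts to $(0:_G x_l)_1 = 0$, which ensures that any nonzero $L \in G_1$ remains nonzero in $A_1$.

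Under the identification $A_i \cong x_l G_i \subseteq G_{i+1}$, the map $\times \bar L : A_i \to A_{i+1}$ coincides with the restriction of $\times L : G_{i+1} \to G_{i+2}$ to the subspace $x_l G_i$ (the image lies inside $x_l G_{i+1}$ because $L \cdot x_l g = x_l \cdot L g$). Consequently, if $\times L$ is injective on $G_{i+1}$, then so is its restriction to $x_l G_i$; this is the key transport mechanism.

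Let $L \in G_1$ be a Weak Lefschetz element of $G$, which is then nonzero in $A_1$. I now apply Proposition~\ref{weaky} to $A$, whose socle degree is $D-1$. In Case~1 ($D$ odd), $D-1$ is even and $\lfloor(D-1)/2\rfloor = k$, so it suffices to show injectivity of $\times \bar L : A_{k-1} \to A_k$. From Proposition~\ref{weaky}(1) applied to $G$, $\times L : G_k \to G_{k+1}$ is an isomorphism, and its restriction to $x_l G_{k-1}$ is therefore injective. In Case~2 ($D$ even), $D-1$ is odd and $\lfloor(D-1)/2\rfloor = k-1$, so the criterion demands that $\times \bar L : A_{k-1} \to A_k$ be an isomorphism; by Gorenstein symmetry of $A$ in socle degree $D-1$, $\dim_K A_{k-1} = \dim_K A_k$, so injectivity suffices. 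Proposition~\ref{weaky}(2) for $G$ gives $\times L : G_{k-1} \to G_k$ injective, which combined with the added hypothesis $\dim_K G_{k-1} = \dim_K G_k$ makes it an isomorphism. Gorenstein symmetry of $G$ gives $\dim_K G_{k-1} = \dim_K G_{k+1}$, hence $\dim_K G_k = \dim_K G_{k+1}$; together with the surjectivity of $\times L : G_k \to G_{k+1}$ coming from WLP of $G$, the middle map of $G$ is then an isomorphism, and its restriction to $x_l G_{k-1}$ is injective.

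The main obstacle is Case~2: in even socle degree, WLP of $G$ only yields \emph{surjectivity} of the middle multiplication on $G$, and a surjection restricted to a subspace need not be injective. The dimension hypothesis $\dim_K G_{k-1} = \dim_K G_k$ is exactly what is needed to promote surjectivity to an isomorphism, so that the restriction argument delivers injectivity (and hence, via the Hilbert function symmetry of $A$, bijectivity) of $\times \bar L : A_{k-1} \to A_k$.
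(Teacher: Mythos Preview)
Your proof is correct and essentially parallel to the paper's, but viewed from the dual side. The paper works with the quotient description $A=G/J$ where $J=(0:_G x_l)$, chooses a $K$-linear splitting $G\cong A\oplus J$, and uses that $LJ\subseteq J$ to deduce \emph{surjectivity} of $\times\bar L$ on the relevant middle piece of $A$ from the bijectivity of $\times L$ on the middle piece of $G$. You instead identify $A$ with the submodule $x_lG\subseteq G$ (shifted by one), and use that injectivity of $\times L:G_{i+1}\to G_{i+2}$ restricts to injectivity on $x_lG_i$; this gives \emph{injectivity} of $\times\bar L$ on the same middle piece. Both arguments reduce to the single fact that, under the stated hypotheses, the middle multiplication map in $G$ is an isomorphism, so there is no substantive difference in strength. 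Your submodule viewpoint has the minor advantage of being canonical (no choice of complement is needed) and of making the Gorenstein property of $A$ immediate from the one-dimensional socle of $x_lG$; the paper's quotient viewpoint is perhaps more transparent about why the even-$D$ case requires the extra hypothesis. One small redundancy: the sentence establishing that $\times L:G_{k-1}\to G_k$ is an isomorphism in Case~2 is not used later, since what you actually need (and derive separately) is that $\times L:G_k\to G_{k+1}$ is an isomorphism.
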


\begin{proof}
Since $G$ is Gorenstein, there is a perfect pairing of its homogeneous components, that is $G_d \cong G_{D-d}$ for every $d$. These isomorphisms are obtained associating to a homogeneous element $f \in G_d$, the element $\varphi \in G_{D-d}$ such that $ f \varphi = q $ where $ q $ is the generator of the socle $G_D$.
We observe that the socle of $A$ is the homogeneous component $A_{D-1}$ and its unique generator (modulo $I$) as $K$-vector space is the monomial $x_l^{-1}q \in G_{D-1}$. Therefore it is easy to see that $A$ is a standard graded Gorenstein Artinian algebra.

Since $G$ and $A$ have the same codimension, let $L = \sum_{j=1}^{n} a_j x_j \in G_1= A_1$ be a Weak Lefschetz Element for $ G $ %with $a_j \neq 0$ for all $j$ \ref{lefel2} and let $ k:=\lfloor \frac{D}{2} \rfloor $. 
%Call $z:=x_l$ and $J=(0:_G x_l)$. %As usual we denote by $F$ the polynomial such that $G \cong Q/\Ann_Q(F)$. 

We are going to use the characterization of WLP given in Proposition \ref{weaky}. %We divide the proof in two subcases: 

\textbf{(1) $D$ odd:} \\
By Proposition \ref{weaky}, we have that the multiplication map $\times L : G_k \to G_{k+1} $ is an isomorphism and we want to prove that the map $\times L : A_k \to A_{k+1} $ is surjective.

The ideal $J=(0:_G x_l)$ can be seen as a  $K$-vector subspace of $G$ and by definition $ A \cap J = (0)$. Hence $ G \cong A \oplus J $ as $K$-vector spaces and thus we can write the elements of $G$ in the form $(a,j)$ with $a \in A$ and $j \in J$ and we have $(a,j) \in J$ if and only if $a=0$. 

Take $(a,j) \in  G_{k+1}$ with $a \neq 0$. The map $\times L$ is an isomorphism, so we can consider its preimage $ \times L^{-1}(a,j)= (a_1, j_1) \in G_k$. Showing $a_1 \neq 0$, we will have that $\times L : A_k \to A_{k+1} $ is surjective. \\
Assume $a_1 =0$, then by definition $(a,j)= \times L(a_1,j_1)= \times L(0,j_1)= (0,Lj_1) \in J$, and thus $a=0$. This is a contradiction. \\
\textbf{(2) $D$ even:} \\
We proceed similarly as in the preceding case. By the assumption on the dimensions of $ G_k $ and $ G_{k-1} $, we have that the multiplication map $\times L : G_{k-1} \to G_{k} $ is an isomorphism and, by Proposition \ref{weaky}, we need to prove that the map $\times L : A_{k-1} \to A_{k} $ is surjective (or injective because $A_{k-1} \cong A_{k}$). We can therefore repeat the proof of the preceding case. 
\end{proof}

By using a linear change of coordinates on $ x_1,\ldots , x_n $, we find as an easy corollary that, if $G$ is an Artinian standard graded Gorenstein algebra with the WLP satisfying the assumption of Theorem \ref{main1}, then any quotient $ \dfrac{G}{(0:_G f)} $ with $f \in G_1$ has the WLP. %We did not prove directly this result since the construction of the matrix would have been more complicated.

\begin{corollary}
\label{maincor} 
Let $G= \bigoplus_{i = 0}^D G_i \cong \dfrac{K[x_1,\ldots, x_n]}{I} $ be a standard graded Gorenstein Artinian $K$-algebra with the WLP and let $f \in G_1$ a linear element. Take the same assumptions of Theorem \ref{main1}. \\
Then, the quotient ring $$ A = \dfrac{G}{(0:_G f)} $$ is also a standard graded Gorenstein Artinian $K$-algebra. If $A$ and $G$ have the same codimension, then also $A$ has the WLP.
\end{corollary}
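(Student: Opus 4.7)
The plan is to reduce to Theorem \ref{main1} by a linear change of coordinates so that $f$ becomes one of the generators of the polynomial ring in the chosen presentation. The point is that Theorem \ref{main1} only treats the special case where the linear form is one of the variables $x_l$, but since the statement of the corollary is intrinsic to the graded algebra $G$ and the linear form $f \in G_1$, a different choice of coordinates should not matter.

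Concretely, I would first extend $\{f\}$ to a $K$-basis $\{y_1 = f,\, y_2,\, \ldots,\, y_n\}$ of $G_1$; this is possible because $\dim_K(G_1) = n$ is the codimension of $G$ and $f \neq 0$ (otherwise $A = 0$ and the claim is trivial). Since $G$ is standard graded, the inclusion $G_1 \hookrightarrow G$ extends to a surjection $K[y_1, \ldots, y_n] \twoheadrightarrow G$, giving a new presentation $G \cong K[y_1, \ldots, y_n]/J$ obtained from the original by an invertible linear substitution on the variables.

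Next I would record that this change of coordinates preserves every piece of structure entering the hypotheses of Theorem \ref{main1}: the Hilbert function of $G$, its socle degree $D$, the Gorenstein property, and the WLP are all intrinsic invariants of $G$ as a graded $K$-algebra, not of its presentation; similarly $(0:_G f)$ and the quotient $A = G/(0:_G f)$ depend only on the element $f \in G_1$, not on the way we write $G$. In particular, the hypothesis that $A$ and $G$ have the same codimension is also unaffected.

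Finally I would invoke Theorem \ref{main1} in the new presentation $G \cong K[y_1, \ldots, y_n]/J$, taking $x_l := y_1 = f$. Whichever of the two cases (odd or even $D$) applies transfers verbatim to $A$, and the theorem yields that $A$ is a standard graded Gorenstein Artinian $K$-algebra with the WLP. The only delicate step is the verification in the previous paragraph that the change of coordinates is harmless, and this is essentially formal, so I do not expect a real obstacle.
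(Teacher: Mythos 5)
Your proof is correct and follows essentially the same route as the paper: the paper likewise writes $f=\sum b_i x_i$, assumes $b_1\neq 0$, performs the invertible linear substitution $y_1:=f$, $y_i:=x_i$ for $i\geq 2$ to get a new presentation $G\cong G'$ in which $(0:_G f)$ becomes $(0:_{G'}y_1)$, and then applies Theorem \ref{main1}. Your extra remarks that the Hilbert function, socle degree, Gorensteinness, WLP and codimension are intrinsic to $G$ and hence unaffected by the change of presentation are exactly the (implicit) justification the paper relies on.
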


\begin{proof}
Write $f= \sum b_i x_i$ and, assuming $b_1 \neq 0$, make the linear change of coordinates $ \varphi: K[x_1,\ldots, x_n] \longrightarrow K[y_1,\ldots, y_n] $ given by $y_1:=\varphi(f)$ and $y_i:=\varphi(x_i)$ for $i \geq 2$. Consider the surjective homomorphism $$  K[x_1,\ldots, x_n] \stackrel{\varphi} \longrightarrow K[y_1,\ldots, y_n] \twoheadrightarrow \dfrac{K[y_1,\ldots, y_n]}{\varphi(I)}=: G^{\prime}$$ whose kernel is the ideal $I$. \\
Therefore $G \cong G^{\prime}$. Moreover the ideal $ (0:_{G^{\prime}} y_1) $ of $G^{\prime}$ is the image of the ideal $(0:_G f)$ of $G$. Hence the result follows applying Theorem \ref{main1} to the ring $ \dfrac{G^{\prime}}{(0:_{G^{\prime}} y_1)} $.
\end{proof}

Next example shows that the result of Theorem \ref{main1} is not true in general.

\begin{example}
\label{wachi}
Let $R = K[a, b, x, y, z]$ be the polynomial ring in $5$ variables over a field $K$ (of characteristic zero)and let $Q$ be the corresponding ring of differential operators. We use the presentation of standard graded Artinian Gorenstein $K$-algebras given in Theorem \ref{annq} in order to define two of such rings. Let $f = a^2x + aby + b^2z$, $g = (a^2x + aby + \frac{b^2z}{2})z$ and $I = \Ann_Q(f)$, $J = \Ann_Q(g)$. We consider the $K$-algebras $A= R/I$ and $G = R/J$.

In \cite[p. 138, Example 3.78]{monografia} is shown that $hess^1(f)=0$ and none of
the five variables in $A$ can be eliminated by a linear transformation. Hence $A$ does not have both the SLP and the WLP by Theorem \ref{hess} (notice that the socle degree of $A$ is $3$).

Now, since $ \frac{\partial}{\partial z}g = f $, we have that $$ I = \Ann_Q(f) = \Ann_Q \left( \frac{\partial}{\partial z}g \right)
= \Ann_Q(g) :_R z = J :_R z. $$ Thus, $$A = \dfrac{G}{(0:_G z)}.$$

The $K$-algebra $G$ has socle degree $4$ and Hilbert vector $(1, 5, 10, 5, 1)$. Computing the first Hessian $Hess^1(g)$, it is possible to show that $G$ has the SLP and therefore the WLP.
 \end{example}

\bigskip
%\bigskip

\section{Algebras associated to Ap\'ery Sets}

We want to investigate the Lefschetz properties of a class of Artinian Gorenstein algebras obtained from numerical semigroups. 

Let $S= \langle g_1=m, g_2, \ldots, g_n \rangle \subseteq \N$ be a numerical semigroup. Recall that in this case gcd$(g_1, \ldots, g_n) = 1$. For all the basic knowledge about numerical semigroups and semigroup rings consider as references \cite{ros-san} and \cite{fro}. 

The Ap\'ery set of $S$ with respect to the minimal generator of the semigroup is defined as the set $$  \aap(S):= \{  s \in S: s-g_1 \not \in S \} = \{0=\omega_1 < \omega_2 < \dots < \omega_m= f+ g_1\}, $$ where $f:=$max$(\mathbb N \setminus S)$ is the Frobenius number of $S$. Note that $\aap(S)$ is a finite set and $  |\aap(S)|=g_1=m $.

\begin{definition}
Let $s \in S$. A representation of $s$ is an $n$-uple $ \lambda=(\lambda_1, \ldots, \lambda_n)$ such that $ s=\sum_{i = 1}^n \lambda_i g_i$. The order of $s$ is defined as $$  \ord(s):= \mbox{max}  \{  \sum_{i = 1}^n \lambda_i: \lambda \mbox{ is a representation of } s \}.$$
A representation is said to be maximal if $\ord(s)=\sum_{i = 1}^n \lambda_i.$
\end{definition}

\begin{definition}
\label{mpurosimm}
The semigroup $S$ is said $M$-pure symmetric if for each $i=0, \ldots, m$: \\
(1) $ \omega_{i}+\omega_{m-i}=\omega_{m} $ and \\
(2) $\ord (\omega_{i}) + \ord (\omega_{m-i}) = \ord(\omega_{m})$.
\end{definition}

Therefore the Ap\'ery set of a $M$-pure symmetric semigroup has the structure of a symmetric lattice.

Let $K$ be a field of characteristic zero and consider the homomorphism: $$ \begin{array}{ccccc}

 \Phi: K[x_1,\ldots, x_n] \longrightarrow K[t] \\
& & & &  \\
 x_i  \longmapsto t^{g_i}. \\
\end{array}
$$ The ring $ R=K[S]:=K[t^{g_{1}},\dots,t^{g_{n}}]\cong \dfrac{K[x_1,\ldots, x_n]}{ker(\Phi)}$ is a one dimensional ring associated to the semigroup $S$.  \\
We can associate to any representation of an element $ s \in S $ a monomial in $R$ by the correspondence $$  s= \sum_{i = 1}^n \lambda_i g_i \longleftrightarrow x^{\lambda}:= x_1^{\lambda_1} \cdots x_n^{\lambda_n}. $$ induced by the preceding homomorphism. We can observe that the monomials in $R$ that correspond to different representations of the same element $s$ are equivalent modulo $ ker(\Phi) $.

Consider now the zero dimensional ring $ \overline{R}:=R/x_1R$. \\
Notice that for $s=\sum_{i = 1}^n \lambda_i g_i \in \aap(S)$, we have $\lambda_1=0$ and the corresponding monomial $ \prod_{i=2}^n \overline{ x_i}^{\lambda_i} \neq 0 $ in $\overline{R}$. Conversely if $s \not \in \aap(S)$, then $x_1$ divides $x^{\lambda}$ for at least one representation 
$ \lambda $ of $s$ and hence $$ \overline{R}= \langle \overline{x^{\lambda}}  \ | \sum_{i = 1}^n \lambda_i g_i \in \aap(S) \rangle_K $$ is generated as a $K$-vector space by the classes modulo the ideal $x_1R$ of the monomials $ x^{\lambda} $ for every representation $ \lambda $ of any element of $ \aap(S) $. Notice that in this way there is a one to one correspondence between the elements of $\aap(S)$ and the generators of $ \overline{R} $ as a $K$-vector space.

Recall that for a graded ring $R$ and a homogeneous ideal $ \m $, the associated graded ring of $R$ with respect to $\m$ is defined as $$ \gr_{\mathfrak m}(R):= \bigoplus_{i \geq 0} \dfrac{\m^i}{\m^{i+1}}. $$

\begin{definition}
\label{assalg}
Let $ \overline{\mathfrak m} $ be the maximal homogeneous ideal of $\overline{R}$. Define $$ A= \gr_{\overline{\mathfrak m}} (\overline{R}) $$ to be the \textit{associated graded algebra of the Ap\'ery set of $S$}. 
\end{definition}

By definition of associated graded ring, we have that $$  A= \bigoplus_{i = 0}^D A_i = \langle \overline{x^{\lambda}} \ | \sum_{i = 1}^n \lambda_i g_i \in \aap(S) \mbox{ and } \lambda \mbox{ is maximal }\rangle_K$$ is an Artinian standard graded $K$-algebra generated by the monomials $ \overline{x^{\lambda}}$ associated to the maximal representations of the elements of $ \aap(S) $. Notice that the socle degree is $D= \ord(f+g_1)$ and that we can think of the homogeneous $K$-generators of $A$ to have the same lattice structure as $\aap(S)$.
In the work %\textit{Goto Numbers of a Numerical Semigroup ring and the Gorensteiness of Associated Graded Rings
\cite{bryant}, Lance Bryant characterized when this kind of rings are Gorenstein.

\begin{prop}
\label{bryant}
Let $S$ be a numerical semigroup. Then the ring $A$ associated to $\aap(S)$ is Gorenstein if and only if $S$ is $M$-pure symmetric.
\end{prop}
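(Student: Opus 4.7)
My plan is to translate the Gorenstein property of $A$ into combinatorial conditions on $\aap(S)$ by exploiting the explicit $K$-basis coming from maximal representations of Ap\'ery elements. The crucial preliminary observation is this: if $\lambda,\mu$ are maximal representations of $\omega,\omega'\in\aap(S)$, then in $A$ the product $\overline{x^\lambda}\cdot\overline{x^\mu}$ is nonzero in degree $\ord(\omega)+\ord(\omega')$ if and only if $\omega+\omega'\in\aap(S)$ and $\ord(\omega+\omega')=\ord(\omega)+\ord(\omega')$, in which case it coincides with the basis vector indexed by a maximal representation of $\omega+\omega'$. Establishing this reduction, which requires tracking orders in $\overline R$ and checking that killing $x_1$ does not collapse the expected leading form, is the technical core of the argument.

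For the implication $(\Leftarrow)$, assume $S$ is $M$-pure symmetric. Condition (1) exhibits the involution $\sigma:\omega\mapsto\omega_m-\omega$ on $\aap(S)$, and condition (2) forces $\ord(\sigma(\omega))=D-\ord(\omega)$; hence $\sigma$ restricts to a bijection between the basis of $A_i$ and the basis of $A_{D-i}$, giving $\dim_K A_i=\dim_K A_{D-i}$ and, in particular, $\dim_K A_D=1$ (only $\omega_m$ has order $D$, since otherwise $\sigma$ would produce a second element of order $0$). To conclude Gorensteiness it suffices to show the multiplication pairing $A_i\times A_{D-i}\to A_D$ is non-degenerate. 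But on basis vectors the preliminary observation, together with the uniqueness of $\omega_m$ in degree $D$, shows that the basis element indexed by $\omega$ pairs non-trivially with exactly the basis element indexed by $\sigma(\omega)$ and with no other; the pairing matrix is thus, up to the scalar given by the socle generator, a permutation matrix and hence invertible.

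For the implication $(\Rightarrow)$, assume $A$ is Gorenstein, so the socle is one-dimensional and sits in the top degree $D=\ord(\omega_m)$. Since $\dim_K A_D$ equals the number of $\omega\in\aap(S)$ of order $D$, this forces $\omega_m$ to be the unique such element. Fix any $\omega\in\aap(S)$ with maximal representation $\lambda$; the non-degeneracy of the Gorenstein pairing yields $\phi\in A_{D-\ord(\omega)}$ with $\overline{x^\lambda}\cdot\phi$ equal to the socle generator, and expanding $\phi$ in the monomial basis produces some $\omega'\in\aap(S)$ (with maximal representation $\mu$) such that $\overline{x^\lambda}\cdot\overline{x^\mu}\ne 0$ in $A_D$. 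The preliminary observation forces $\omega+\omega'\in\aap(S)$ with $\ord(\omega+\omega')=\ord(\omega)+\ord(\omega')$; since the product lives in $A_D$, both quantities equal $D$, and uniqueness of $\omega_m$ in degree $D$ gives $\omega+\omega'=\omega_m$. Reading off yields $\omega_m-\omega\in\aap(S)$ (condition (1)) and $\ord(\omega)+\ord(\omega_m-\omega)=D$ (condition (2)).

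The hardest step is the preliminary observation on products, because the grading on $A$ comes from the associated graded of $\overline R=R/x_1 R$ and there is genuine ambiguity in how a maximal representation of $\omega+\omega'$ relates to the sum of maximal representations of $\omega$ and $\omega'$; one has to verify that in the graded piece of degree $\ord(\omega)+\ord(\omega')$ the class is nonzero precisely when no order is lost and the sum stays in $\aap(S)$. Once this bookkeeping is under control, both directions reduce to a transparent dimension count against a one-dimensional socle. A detailed argument along these lines appears in Bryant's paper \cite{bryant}.
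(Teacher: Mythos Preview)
The paper does not actually prove this proposition: it is stated as a result of Bryant and attributed to \cite{bryant}, with no argument given in the text. So there is no ``paper's own proof'' to compare your proposal against; your sketch is in fact more detailed than anything appearing here.

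Your outline is essentially correct and is the natural argument. One small point in the $(\Rightarrow)$ direction deserves a sentence of justification: you assert that the one-dimensional socle sits in degree $D=\ord(\omega_m)$ and that $\omega_m$ is ``the unique such element'', but a priori $D$ is only the maximal order over all of $\aap(S)$, and it is not obvious that the numerically largest Ap\'ery element $\omega_m=f+g_1$ is the one realizing this maximum. This is easily filled using your own machinery: apply the non-degenerate pairing to the basis vector indexed by $\omega_m$ to produce $\omega'\in\aap(S)$ with $\omega_m+\omega'\in\aap(S)$; maximality of $\omega_m$ in $\aap(S)$ forces $\omega'=0$, whence $\ord(\omega_m)=D$. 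With that in place, both directions go through exactly as you describe.
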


\begin{example}
\label{ex1}
Consider the numerical semigroup $S= \langle 8, 10, 11, 12 \rangle$. \\
Its Ap\'ery set is $\aap(S)= \lbrace 0, 10, 11, 12, 21, 22, 23, 33 \rbrace$ and it can be easily checked that $S$ is $M$-pure symmetric. The associated graded Artinian algebra is $$  A= K \oplus \langle y, z, w \rangle K  \oplus \langle yz, yw, zw \rangle K \oplus \langle yzw \rangle K. $$ Since in the semigroup $ 22= 11+11= 10+12 $, then in the ring $A$, $ yw \equiv z^2 $ and hence $$  A  \cong \dfrac{K[y, z, w]}{(y^2, z^2 - yw, w^2)}.$$
\end{example}

\bigskip

 In order to check when this kind of graded rings have the Lefschetz properties we want to use Theorem \ref{hess} and some consequences of it and therefore we need to find the polynomial $F$ such that $A$ is isomorphic to $\frac{Q}{\Ann_Q(F)}$ (see Theorem \ref{annq}) and compute its Hessians.
 
It is possible to write the Ap\'ery set of $S$ as $ \aap(S)= \bigcup_{d \geq 0}^D Ap_d $ where $Ap_d$ is the set of the element of $\aap(S)$ of order $d$. It is clear from the definition of $ A $ that the dimension of $A_d$ as $K$-vector space is equal to the cardinality of $Ap_d$. 

From now on we are going to use the previous notation $x^{\lambda}$ for the homogeneous monomials in $A$ instead of the heavier notation $ \overline{x^{\lambda}} $ (or we will use the variables $y, z, w$ if in codimension 3).
We can choose as basis $\B_d$ of $ A_d $ a set of of monomials $ \lbrace x^{\lambda^{1}}, \ldots, x^{\lambda^{b_d}} \rbrace $ where the $ \lambda^{j} $ are maximal representations of the elements $ \omega_{j} \in Ap_d$ and $ b_d= \dim_K(A_d) $. For instance in Example \ref{ex1} we can choose equivalently as basis for $A_2$ either the set $ \lbrace yz, yw, zw \rbrace $ or the set $ \lbrace yz, z^2, zw \rbrace $ since $ yw \equiv z^2 $ in such ring $A$.

\begin{theorem}
\label{Fnum}
Call $ \Lambda $ the set of the maximal representations of the maximal element of $\aap(S)$, $f+g_1$. \\
The graded ring $A$ associated $ \aap(S) $ is isomorphic to $\frac{Q}{\Ann_Q(F)}$, where $Q=K[X_1,\ldots,X_n]$ and $F=\sum_{\lambda \in \Lambda} x^{\lambda}$.
\end{theorem}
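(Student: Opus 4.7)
The plan is to use the Macaulay inverse system framework of Theorem \ref{annq} and to verify directly that the proposed $F = \sum_{\lambda \in \Lambda} x^{\lambda}$ has the property $A \cong Q/\Ann_Q(F)$. The basic setup is clean: since every $\lambda \in \Lambda$ is a maximal representation of $f+g_1$, we have $|\lambda| = \ord(f+g_1) = D$ (the socle degree of $A$), so $F$ is homogeneous of degree $D$; moreover $\lambda_1 = 0$ for every $\lambda \in \Lambda$, because otherwise $\sum_i \lambda_i g_i - g_1 = f$ would belong to $S$, contradicting that $f$ is the Frobenius number.

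The key computation is the action
\[
X^\mu \cdot F = \sum_{\lambda \in \Lambda,\, \lambda \ge \mu} c_{\mu,\lambda}\, x^{\lambda-\mu}
\]
for a monomial $X^\mu \in Q$, where the $c_{\mu,\lambda}$ are the nonzero multinomial coefficients produced by differentiation. Using the $M$-pure symmetry of $S$ (Proposition \ref{bryant}) I would show that $X^\mu \cdot F = 0$ if and only if $\mu$ is not a maximal representation of any element of $\aap(S)$: on the one hand, if $\lambda \in \Lambda$ satisfies $\lambda \ge \mu$, then writing $\omega = \sum_i \mu_i g_i$ and $\omega' = (f+g_1) - \omega$, the equality $|\lambda| = |\mu| + |\lambda-\mu| = D$ together with $\ord(\omega) + \ord(\omega') \le D$ forces $\mu$ to be a maximal representation of $\omega$, and one checks that $\omega \in \aap(S)$ (otherwise $f = (\omega - g_1) + \omega' \in S$). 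Conversely, if $\mu$ is a maximal representation of $\omega \in \aap(S)$, then by $M$-pure symmetry $f+g_1 - \omega$ lies in $\aap(S)$ and admits a maximal representation $\nu$, and $\mu + \nu \in \Lambda$ satisfies $\mu + \nu \ge \mu$. This matches, monomial by monomial, the vanishing locus in $A$.

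To upgrade this monomial-level match to a ring isomorphism, I would compare binomial relations: if $\mu, \mu'$ are two maximal representations of the same $\omega \in \aap(S)$, then $x^\mu = x^{\mu'}$ in $A$, while in $Q/\Ann_Q(F)$ the classes of $X^\mu$ and $X^{\mu'}$ are related by a nonzero scalar inherited from the multinomial factors in $X^\mu \cdot F$ and $X^{\mu'} \cdot F$. One then constructs an explicit algebra isomorphism $A \to Q/\Ann_Q(F)$ by sending $\tilde\omega \mapsto k_\omega\, X^{\mu(\omega)}$ for a chosen representative $\mu(\omega)$ and scalar $k_\omega$, determined recursively along the lattice of $\aap(S)$ so that the multiplication relations agree. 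A Hilbert function count $\dim_K (Q/\Ann_Q(F))_d = |Ap_d| = \dim_K A_d$ in every degree then confirms bijectivity. The main technical difficulty is precisely this scalar reconciliation: although the monomial structure matches on the nose, the binomial relations in $\Ann_Q(F)$ carry coefficient weights that must be absorbed by the choice of $k_\omega$, and the key input making such a consistent choice possible is the $M$-pure symmetric lattice structure of $\aap(S)$.
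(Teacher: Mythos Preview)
Your approach diverges from the paper's in a crucial way. The paper does not attempt to verify $I=\Ann_Q(F)$ or to construct the isomorphism by hand. Instead it invokes the Maeno--Watanabe construction behind Theorem~\ref{annq}: for any Gorenstein $A$ one has $A\cong Q/\Ann_Q(\theta(1))$, where $\theta$ is the canonical inclusion $\Hom(A,K)\hookrightarrow\Hom(Q,K)\cong K[[x_2,\dots,x_n]]$. The whole proof then reduces to computing the single element $\theta(1)$: one evaluates the functional $\overline{\varphi_1}$ (``take the socle component'') on each monomial $X^\alpha$ and observes that it returns $1$ exactly on the maximal representations of $f+g_1$ and $0$ otherwise. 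The isomorphism is supplied by the general theory; nothing has to be reconciled.

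Your direct route has a genuine gap at the ``scalar reconciliation'' step. With the differentiation action you use, $X^\mu F=\sum_\nu \frac{(\mu+\nu)!}{\nu!}\,x^\nu$, the sum running over the maximal representations $\nu$ of $\omega'=(f+g_1)-\omega$. Your claim that for two maximal representations $\mu,\mu'$ of the same $\omega$ the classes of $X^\mu$ and $X^{\mu'}$ differ by a nonzero scalar in $Q/\Ann_Q(F)$ amounts to saying that the ratio $(\mu+\nu)!/(\mu'+\nu)!$ is independent of $\nu$; this fails as soon as $\omega'$ itself has more than one maximal representation. Hence no diagonal rescaling $x_i\mapsto k_iX_i$ will send the binomials $x^\mu-x^{\mu'}$ of the defining ideal of $A$ into $\Ann_Q(F)$, and the recursive construction of the $k_\omega$ you sketch cannot be carried out in general. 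The resolution is that the pairing the paper actually uses between $Q$ and $K[[x_2,\dots,x_n]]$ is the \emph{contraction} pairing (the functional ``read off the coefficient of $x^\alpha$''): under contraction $X^\mu\cdot x^\lambda=x^{\lambda-\mu}$, all the multinomial factors are $1$, and then your monomial-level matching goes through with no scalar adjustment at all---but that is essentially the paper's computation, not the one you outlined.
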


\begin{proof}
We want to apply to this particular case of graded rings associated to the Ap\'ery set of a semigroup the general proof of the existence of the polynomial $F$ given by Maeno and Watanabe \cite[Theorem 2.1]{maewat}. \\
Identifying the algebra $A$ with the quotient of $Q$ by an ideal $I$ (called the defining ideal of $A$), we have the exact sequence of modules $ Q \to A \to 0 $. That sequence induces another exact sequence $$ 0 \to \Hom(A,K)\cong A \stackrel{\theta} \to \Hom(Q,K)\cong K[[x_2, \ldots, x_n]]. $$
Maeno and Watanabe proved that $F$ is equal to $ \theta(1)\in K[[x_2, \ldots, x_n]] $ with $1:=(1,0, \ldots, 0) \in A$. \\
In order to use this fact we recall that the isomorphism between the ring $A$ and $ \Hom(A,K) $ is the application that maps $a=(a_0, \ldots, a_D) \in A$ to the map $ \varphi_{a}: A \to K $ defined by $ \varphi_{a}(c)= \sum_{i=0}^{D} a_{i} c_{D-i} $ for each $c=(c_0, \ldots, c_D) \in A$. 
We also recall that the isomorphism between $ K[[x_2, \cdots, x_n]] $ and $ \Hom(Q,K) $ is obtained identifying a formal power series $ f$ with the homomorphism which maps every monomial of $Q$ to its corresponding numerical coefficient in the power series $f$. \\
Thus we have $1 \in A$ identified with the homomorphism $ \varphi_{1} $ mapping $c=(c_0, \ldots, c_D) \in A$ to its last component $c_D$. Hence we compute $$ F=\theta(1)=\sum \overline{\varphi_1}(X_2^{s_2} \cdots X_n^{s_n}) x_2^{s_2} \cdots x_n^{s_n},$$ where the sum is taken over the infinite basis of $Q$ over $K$ and $ \overline{\varphi_1}(\alpha):= \varphi_1(\alpha+I) $ for $ \alpha \in Q $. \\
Now we compute $ \overline{\varphi_1}(X_2^{s_2} \cdots X_n^{s_n}) $ for all possible values of the $ s_i $. Let $ \alpha= X_2^{s_2} \cdots X_n^{s_n} $ and $ s= \sum_{i=1}^{n} s_i g_i \in S $. If $ s \not \in \aap(S) $ or $ s \in \aap(S) $ but $ \ord(s) > \sum_{i=1}^{n} s_i $, then $\alpha \in I$ and $ \overline{\varphi_1}(\alpha)=0 $. \\
All the other monomials of the $K$-basis of $Q$ are associated to a maximal representation of an element $ \omega \in Ap_d$, therefore for every $ \alpha= X_2^{s_2} \cdots X_n^{s_n} $ there exists a monomial $ X^{\lambda} $ such that $ \alpha \equiv X^{\lambda}$ mod $I$. \\ % where $d$ is the order of $ \omega. $ X^{\lambda_{dj}} \\
If $d<D$, the $D$-th component of $ X^{\lambda} $ is equal to zero and in this case $\overline{\varphi_1}(\alpha)=\overline{\varphi_1}(X^{\lambda})=0 $. If instead $d=D$, it is clear that $\alpha$ is a maximal representation of $ f+m $ and $ \overline{\varphi_1}(\alpha)=\overline{\varphi_1}(X^{\lambda})=1 $.  Thus in the sum only the maximal representations of $f+g_1$ with coefficient $1$ survive. 
\end{proof}

\begin{example}
\label{ex2}
Let $S= \langle 16, 18, 21, 27 \rangle$.
We compute the Hessians to study the Lefschetz properties of the algebra $A$ associated to the Ap\'ery set of $S$. \\
We have that $\aap(S)= \lbrace 0, 18, 21, 27, 36, 39, 42, 45, 54, 57, 60, 63, 72, 78, 81, 99 \rbrace$ and $S$ is $M$-pure symmetric.

%METTO IL GRAFICO??
Doing the computation as in Example \ref{ex1} we see that in this case $$  A  \cong \dfrac{K[y, z, w]}{(y^5, z^3 - y^2w, w^2, zw, y^3z)}.$$
Hence the socle degree of $ A $ is $D=5$. In the semigroup $99=4 \cdot 18 + 27= 2 \cdot 18 + 3\cdot 21$ and by \ref{Fnum} the polynomial $F=y^4w+y^2z^3$. 
We choose as bases respectively $ \lbrace y, z, w \rbrace $ for $ A_1 $ and $ \lbrace y^2, yz, z^2, yw  \rbrace $ for $ A_2 $.

We compute the first Hessian of $F$, $$  Hess^{1}(F)=\begin{pmatrix}
y^2w+ z^3 & yz^2 & y^3 \\ 
yz^2 & zy^2 & 0 \\
y^3 & 0 & 0
\end{pmatrix}  
 $$

%For the second and maximal Hessian we look at the links in $\aap(S)$ in the following graph:

%\includepdf{grafodispari.pdf}

The second Hessian is $$Hess^{2}(F)=\begin{pmatrix}
 w  & 0 &  z &  y \\ 
0 &  z &  y & 0 \\
 z &  y & 0 & 0 \\
 y & 0 & 0 & 0
\end{pmatrix}  
$$

The hessians have both maximal rank, hence $A$ has the SLP.

\end{example}

We give the following definition, which allows us to apply the result of Theorem \ref{main1} to the algebras associated to Ap\'ery sets.

\begin{definition}
\label{discesa}
Let $A$ be a standard graded Gorenstein Artinian $K$-algebra. We say that $A$ satisfies the \it Quotient Condition \rm, if there exists a standard graded Gorenstein Artinian $K$-algebra $G$ having the WLP such that $$ A = \dfrac{G}{(0:_G x_l^C)} $$ for some linear monomial $x_l$ and some integer $C \geq 0.$ (Notice that when $C=0$, $A=G$ has the WLP.)
\end{definition}

If a standard graded Gorenstein Artinian $K$-algebra $A$ satisfies the Quotient Condition, by induction $$  A = \dfrac{G}{(0 :_G x_l^C)}= \dfrac{\dfrac{G}{(0 :_G x_2^{C-1})}}{(0 :_{G^{\prime}} \overline{x_l})} $$ with $G^{\prime}:=\dfrac{G}{(0 :_G x_l^{C-1})}$. If the hypothesis of Theorem \ref{main1} are satisfied at each step of the induction, then $A$ has the WLP.

In the next sections, we are going to show that all the algebras associated to Ap\'ery sets which are either Complete Intersection or of codimension 3 satisfy the Quotient Condition. Hence, whenever one of these algebras $A = \dfrac{G}{(0:_G x_l)}$ is a quotient of an algebra $G$ satisfying the hypothesis of Theorem \ref{main1}, then it has the WLP. 

Numerical computations have lead to the following conjecture: 

\begin{conjecture}
\label{conj}
Let $S$ be a numerical semigroup and let $A$ be the associated graded algebra of the Ap\'ery set of $S$. If $A$ is Complete Intersection or if the codimension of $A$ is 3, then any quotient of the form $ \dfrac{A}{(0:_A x_l)}$ has the WLP.
\end{conjecture}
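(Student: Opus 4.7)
The plan is to combine the Quotient Condition (Definition \ref{discesa}) with Theorem \ref{main1}. Granting the results of Sections 4 and 5 — which establish the Quotient Condition for CI and codimension-3 Apéry algebras — we have $A = G/(0:_G x_l^C)$ for some Gorenstein Artinian $K$-algebra $G$ with the WLP, a linear monomial $x_l$, and an integer $C \geq 0$. A direct colon-ideal calculation in $G$ shows that the preimage of $(0:_A x_l)$ under the projection $G \twoheadrightarrow A$ is $(0:_G x_l^{C+1})$, hence $A/(0:_A x_l) \cong G/(0:_G x_l^{C+1})$. It therefore suffices to show that every term of the chain $G_m := G/(0:_G x_l^m)$, for $m \geq 0$, has the WLP.

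I would then induct on $m$. The base case $m=0$ is $G$ itself, with the WLP by hypothesis. For the inductive step, $G_{m+1} = G_m/(0:_{G_m} x_l)$, and Theorem \ref{main1} applied to $G_m$ yields the WLP for $G_{m+1}$ provided that (i) $G_m$ and $G_{m+1}$ share the same codimension, and (ii) if the socle degree $D_m = D - m$ of $G_m$ is even, then $\dim_K((G_m)_{k-1}) = \dim_K((G_m)_k)$ for $k = \lfloor D_m/2 \rfloor$. Note that the socle degree strictly decreases by one at each step, so the parities alternate and condition (ii) must be re-verified infinitely often through the chain. In the CI case of Section 4, the explicit monomial description of the defining ideal coming from the semigroup generators should allow a combinatorial verification of (i) and (ii); in the codimension-3 case of Section 5, the complete characterization of $A$ as $G/(0:_G z^C)$ with $G$ a CI already pinpoints $G_m$ at each stage, and since $G$ is a codimension-3 CI Gorenstein algebra it enjoys the WLP automatically.

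The main obstacle is the verification of (i) and, especially, (ii) at \emph{every} inductive step, not only the first. Codimension may drop if a degree-one generator of $G_m$ becomes absorbed into $(0:_{G_m} x_l)$; ruling this out should be tractable via the lattice structure of $\aap(S)$ but requires careful bookkeeping on maximal representations. The middle-dimension equality in (ii) is more delicate: the symmetric Hilbert function of $G_m$ is inherited from that of $G$ via a truncation recipe, and demanding $\dim_K((G_m)_{k-1}) = \dim_K((G_m)_k)$ at each parity-even step is a nontrivial combinatorial constraint on the cardinalities $|Ap_d|$. A possible alternative that bypasses the iteration is to compute directly, via Theorem \ref{Fnum}, the polynomial $F'$ such that $A/(0:_A x_l) \cong Q/\Ann_Q(F')$, and then invoke the Hessian criterion (Theorem \ref{hess}) by an explicit determinantal calculation: in codimension 3 the relevant (mixed) Hessian matrix is small enough for a direct analysis, while in the CI case $F'$ decomposes into a product-like form along the semigroup generators. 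Obtaining a uniform proof by this Hessian route would be the cleanest outcome, and the reason the statement is posed as a conjecture rather than a theorem is precisely that neither the iterative nor the direct approach has yet been pushed through in full generality.
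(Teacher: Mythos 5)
This statement is posed in the paper as an open conjecture: the paper gives no proof of it, and explicitly remarks that proving it ``means to prove that, for these rings, Theorem \ref{main1} holds also in the case of even socle degree without further assumptions on the dimension of the graded components.'' Your proposal is therefore not a proof but a proof plan, and to your credit you say so and correctly locate the same obstruction the paper identifies: in the iterated chain $G_m = G/(0:_G x_l^m)$ the socle degree drops by one at each step, so roughly every other step has even socle degree and requires the middle-dimension equality $\dim_K((G_m)_{k-1}) = \dim_K((G_m)_k)$, which is exactly the hypothesis of Theorem \ref{main1}(2) that is not known to hold in general. Your reduction $A/(0:_A x_l) \cong G/(0:_G x_l^{C+1})$ is a correct and useful observation that the paper does not state explicitly.

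There is, however, one additional gap you do not flag. The conjecture quantifies over \emph{any} $x_l$, whereas the Quotient Condition established in Sections 4 and 5 presents $A$ as $G/(0:_G x_l^C)$ for one \emph{specific} variable (namely $x_2$ in the CI case of Theorem \ref{teo2}, and $z$ in the codimension-3 case of Theorem \ref{colon}). Your chain argument only applies when the variable in the colon ideal $(0:_A x_j)$ coincides with that distinguished variable; for $j \neq l$ the quotient $A/(0:_A x_j)$ does not sit in the chain $G/(0:_G x_l^m)$ at all, and you would need either a separate Quotient Condition with respect to $x_j$ or a different argument (e.g.\ your suggested direct Hessian computation via Theorem \ref{Fnum}). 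So even granting the even-degree dimension condition at every step, the proposal as written would only prove the conjecture for one choice of $x_l$. Both gaps are genuine, and the second is worth recording alongside the first as what separates the conjecture from the theorems of Sections 4 and 5.
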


Proving this conjecture to be true means to prove that, for these rings, Theorem \ref{main1} holds also in the case of even socle degree without further assumptions on the dimension of the graded components. We are going also to show that, in this particular case of rings associated to Ap\'ery sets, the colon ideal $(0:_G x_l)$ can be generated as ideal of $G$ by monomials. This does not happen in Example \ref{wachi}, because one of the minimal generators of $(0:_G z)$ in that case is the binomial $ay-bz.$

\bigskip
%\bigskip

\section{Complete Intersections Algebras}

In this section we recall some results of D'Anna, Micale and Sammartano \cite{da-mi-sa} that we need to characterize when the graded algebra associated to the Ap\'ery Set of a numerical semigroup is a Complete Intersection.
Let $S= \langle g_1=m, g_2, \ldots, g_n \rangle $ be a numerical semigroup.
In \cite{da-mi-sa} the authors introduced two hyper-rectangles in $\N^{n-1}$ that contain the representations of the elements $\aap(S)$ and whose properties determine when the associated graded algebra $A=\bigoplus_{i = 0}^D A_i$ is a Complete Intersection.
At the end of this section, using a classical criterion for Weak Lefschetz properties of Complete Intersection algebras, we prove that any Complete Intersection algebra $A$ associated to the Ap\'ery Set of a numerical semigroup satisfies the Quotient Condition (given in Definition \ref{discesa}).

\begin{definition}
\label{gammabetadef}
For $2 \leq i \leq n$, define: 

%$\rc \alpha_i \ec :=\max\{ h \in \mathbb{N} \, | \, hg_i \in \aap_m(S)\};$\\
$ \beta_i :=\max\{ h \in \mathbb{N} \, | \, hg_i \in \aap(S) \ \text{ and } \ \ord(h g_i)=h\};$ \\
$ \gamma_i :=\max\{ h \in \mathbb{N} \, | \, hg_i \in
\aap(S), \, \ \ord(h g_i)=h \  \text{ and }$  $ hg_i \text{  has a unique maximal
representation}\}.$
\end{definition}

The positive natural numbers $\beta_i$ and $ \gamma_i $ are strongly related to the degrees of the generators of the defining ideal of $A$ seen as quotient of the polynomial ring in $n-1$ variables.

\begin{remark}
\label{gamma2n}
For all $i=2, \ldots, n$, $\gamma_i \leq \beta_i$. But always $\gamma_2 = \beta_2$ and  $\gamma_n = \beta_n$.
\end{remark}

\begin{proof}
By definition $\gamma_i \leq \beta_i$ for every $i$. For the second statement assume $\gamma_2 < \beta_2$. Then we must have that $ (\gamma_2+1)g_2 = \sum_{j\neq 2}\lambda_jg_j $ are two different maximal representations of the same element of $\aap(S)$. Hence they have the same order and therefore $ (\gamma_2+1) = \sum_{j\neq 2}\lambda_j $, but this is impossible since $g_2 < g_3 < \ldots < g_n$. For the same reason it follows that $\gamma_n = \beta_n$.
\end{proof}

For the proofs of all the following facts see \cite[Section 2]{da-mi-sa}.

\begin{prop}
\label{raprmax}
Let $\omega= \sum_{i=2}^\nu \lambda_i g_i \in \aap(S)$ with $ \lambda= (\lambda_2 , \ldots, \lambda_n) $ a maximal representation.
Then $ \lambda_i \leq \beta_i$ for each $i$. If $ \lambda $ is the maximum of the set of maximal representations of $s$ with respect to the lexicographic order, then $ \lambda_i \leq \gamma_i$ for each $i$.
\end{prop}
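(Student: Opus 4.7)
The plan is to reduce both bounds to the definitions of $\beta_i$ and $\gamma_i$ by showing that $\lambda_i g_i$ itself already satisfies the conditions used to define them. For the bound $\lambda_i \leq \beta_i$, I would verify the two defining properties of $\beta_i$ applied to $h = \lambda_i$. That $\lambda_i g_i \in \aap(S)$ is shown by contradiction: if $\lambda_i g_i - g_1 \in S$, then adding $\sum_{j \neq 1,\, j \neq i} \lambda_j g_j$ produces $\omega - g_1 \in S$, contradicting $\omega \in \aap(S)$. That $\ord(\lambda_i g_i) = \lambda_i$ is similar: any representation of $\lambda_i g_i$ of strictly greater length, substituted back into $\omega = \lambda_i g_i + \sum_{j \neq i} \lambda_j g_j$, yields a representation of $\omega$ of length exceeding $\sum_j \lambda_j = \ord(\omega)$, contradicting maximality of $\lambda$. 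Together these give $\lambda_i \leq \beta_i$.

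For the lex-maximum case, the first part reduces the problem to showing that $\lambda_i g_i$ admits a \emph{unique} maximal representation. Suppose, toward a contradiction, that there is another maximal representation $\mu = (\mu_2, \ldots, \mu_n)$ of $\lambda_i g_i$ distinct from the trivial one concentrated at position $i$. I would first rule out the possibility that $\mu_j = 0$ for every $j < i$: combining $\sum_{j \geq i} \mu_j g_j = \lambda_i g_i$ with the length identity $\sum_{j \geq i} \mu_j = \lambda_i$ gives $\sum_{j > i} \mu_j (g_j - g_i) = 0$, and since $g_j > g_i$ for $j > i$ this forces $\mu_j = 0$ for all $j > i$, hence $\mu_i = \lambda_i$, i.e.\ $\mu$ is the trivial representation, against the choice of $\mu$. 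So there is a smallest index $j_0 < i$ with $\mu_{j_0} > 0$.

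Now I would substitute $\mu$ for the single summand $\lambda_i g_i$ in the representation of $\omega$, producing a new representation $\lambda'$ of $\omega$ with $\lambda'_j = \lambda_j + \mu_j$ for $j \neq i$ and $\lambda'_i = \mu_i$. A length count gives $\sum_j \lambda'_j = \sum_j \lambda_j = \ord(\omega)$, so $\lambda'$ is again maximal. But $\lambda'_j = \lambda_j$ for $j < j_0$ (since $\mu_j = 0$ there) and $\lambda'_{j_0} > \lambda_{j_0}$, so $\lambda' > \lambda$ lexicographically, contradicting lex-maximality of $\lambda$. Hence $\lambda_i g_i$ has a unique maximal representation, and $\lambda_i \leq \gamma_i$ by the definition of $\gamma_i$.

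The most delicate point in the whole argument is the step ruling out a $\mu$ supported on indices $\geq i$: one has to exploit the element identity $\sum \mu_j g_j = \lambda_i g_i$ and the length identity $\sum \mu_j = \lambda_i$ in tandem with the strict monotonicity of the generators. Once this is secured, the substitution-and-lex comparison that finishes the proof is essentially bookkeeping, and the two bounds are obtained uniformly from the same underlying idea of "localising" the maximal representation of $\omega$ at the $i$-th coordinate.
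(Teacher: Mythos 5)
Your proof is correct, and it is complete: the reduction of both bounds to the defining sets of $\beta_i$ and $\gamma_i$, the two contradiction arguments showing $\lambda_i g_i \in \aap(S)$ and $\ord(\lambda_i g_i)=\lambda_i$, and the substitution-plus-lex comparison are exactly the standard argument; in particular you correctly isolate and dispose of the one genuinely delicate case, namely a second maximal representation of $\lambda_i g_i$ supported on indices $\geq i$, which is needed because otherwise the substituted representation could be lex-smaller. The paper itself does not prove this proposition but defers to \cite[Section 2]{da-mi-sa}, where the argument is essentially the one you give.
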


\begin{definition}
\label{GammaBetadef}
Define two hyper-rectangles in $\N^{n-1}$: \\
$  B = \Big\{\sum_{i=2}^{n} \lambda_i g_i \, | \, 0 \leq \lambda_i
\leq \beta_i \Big\}$ and $ \Gamma = \Big\{\sum_{i=2}^{n} \lambda_i g_i \, | \, 0 \leq \lambda_i
\leq \gamma_i \Big\}$. 
\end{definition}

Using Proposition \ref{raprmax}, it can be proved that $$ \aap(S) \subseteq \Gamma \subseteq B $$ and moreover we can give some characterizations of when the possible equalities hold. %For all the proofs of these facts see \cite{da-mi-sa}.

\begin{prop}
\label{Beta}
The following assertions are equivalent for a numerical semigroup $S$: \begin{enumerate}
\item $\aap(S)= B$.
\item The maximal element of $\aap(S)$, $f+g_1$ has a unique maximal representation.
\item All the elements of $\aap(S)$ have a unique maximal representation.
\item $g_1= \prod_{i=2}^n (\beta_i+1)$.
\item $D=\ord(f+g_1)= \sum_{i=2}^n \beta_i$.
\end{enumerate}
\end{prop}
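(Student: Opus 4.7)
My plan is a cyclic chain $(3) \Rightarrow (2) \Rightarrow (5) \Rightarrow (1) \Rightarrow (4) \Rightarrow (3)$, resting on three ingredients: the cardinality sandwich $g_1 = |\aap(S)| \leq |B| \leq \prod_{i=2}^n(\beta_i+1)$, where the left inequality is Proposition~\ref{raprmax} and the right is the trivial count of lattice points in the box; the closure of $\aap(S)$ under sub-representations (if $s \in \aap(S)$ and $s = s_1+s_2$ with $s_1,s_2 \in S$, then $s_1, s_2 \in \aap(S)$, since $s_i - g_1 \in S$ would force $s - g_1 \in S$); and the symmetric pairing $\omega \leftrightarrow (f+g_1)-\omega$ on $\aap(S)$ with additive orders $\ord(\omega) + \ord((f+g_1)-\omega) = D$, available in the $M$-pure symmetric Gorenstein setting implicit in this section. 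Two links are bookkeeping: (3)~$\Rightarrow$~(2) is trivial since $f+g_1 \in \aap(S)$, and (4)~$\Rightarrow$~(3) follows because $g_1 = \prod(\beta_i+1)$ collapses the sandwich to equalities, forcing the box-to-$B$ map to be bijective and Proposition~\ref{raprmax} to identify each unique box preimage as a maximal representation.

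The central step is (2)~$\Rightarrow$~(5). Let $\lambda^*$ be the unique maximal representation of $f+g_1$, so $\sum \lambda_i^* = D$ and $\lambda_i^* \leq \beta_i$ by Proposition~\ref{raprmax}, hence $D \leq \sum \beta_i$. For the reverse I argue by contradiction: suppose $\lambda_j^* < \beta_j$ for some $j$. The symmetric pairing sends $\beta_j g_j \in \aap(S)$ (of order $\beta_j$) to $(f+g_1) - \beta_j g_j \in \aap(S)$ of order $D - \beta_j$. Taking a maximal representation $\mu$ of the latter and forming $\mu + \beta_j e_j$ produces a length-$D$ representation of $f+g_1$ whose $j$-th coordinate is at least $\beta_j > \lambda_j^*$, hence a maximal representation distinct from $\lambda^*$, contradicting~(2). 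Thus $\lambda^* = (\beta_2,\ldots,\beta_n)$ and $D = \sum \beta_i$.

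The remaining links close the cycle. (5)~$\Rightarrow$~(1) is immediate from closure: $f+g_1 = \sum \beta_i g_i$ forces every box sum $\sum \lambda_i g_i$ with $\lambda_i \leq \beta_i$ into $\aap(S)$, so $B \subseteq \aap(S)$ and $\aap(S) = B$. For (1)~$\Rightarrow$~(4) I first recover (5): $\sum \beta_i g_i \in B = \aap(S)$ has order at least $\sum \beta_i$ and at most $D$, which combined with $D \leq \sum \beta_i$ from Proposition~\ref{raprmax} gives $D = \sum \beta_i$. The additivity of orders then shows every box tuple $\lambda$ is a maximal representation of $\sum \lambda_i g_i$ (its complement $\beta - \lambda$ gives a representation of the paired element, and lengths sum to $D$), and uniqueness of maximal representations forces the box to inject into $\aap(S)$, giving $g_1 = \prod(\beta_i+1)$. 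The main obstacle is the exchange step in (2)~$\Rightarrow$~(5): producing a competing maximal representation of $f+g_1$ from a defect $\beta_j - \lambda_j^*$ is where the $M$-pure symmetric hypothesis does the essential work.
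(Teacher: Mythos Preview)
The paper does not give its own proof here: immediately before Proposition~\ref{raprmax} it writes ``For the proofs of all the following facts see \cite[Section~2]{da-mi-sa}'', so Propositions~\ref{raprmax}--\ref{definingideal} are quoted from D'Anna--Micale--Sammartano without argument. There is therefore no in-paper proof to compare yours against.

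As to correctness: your cycle is sound, but only because you explicitly bring in the $M$-pure symmetric hypothesis. That hypothesis is \emph{not} part of the proposition as stated, and without something like it the equivalences fail. Take $S=\langle 3,4,5\rangle$: then $\aap(S)=\{0,4,5\}$ and $f+g_1=5$ has the single representation $(0,0,1)$, so (2) and (3) hold; yet $\beta_2=\beta_3=1$, $B=\{0,4,5,9\}\neq\aap(S)$, $\prod_i(\beta_i+1)=4\neq 3=g_1$ and $\sum_i\beta_i=2\neq 1=D$, so (1), (4), (5) all fail. Your step $(2)\Rightarrow(5)$ is precisely where the pairing $\omega\mapsto (f+g_1)-\omega$ with additive orders does essential work, and this is the step that collapses in the example above. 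Since the paper only ever uses these equivalences for $M$-pure symmetric $S$ (that being the Gorenstein case for $A$), your reading of the assumption as ``implicit in this section'' is defensible; presumably the source \cite{da-mi-sa} packages the hypotheses somewhat differently.

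One expositional wrinkle: in $(1)\Rightarrow(4)$ you invoke ``uniqueness of maximal representations'' to force the box to inject, but (3) is not yet available at that point of your cycle. In fact this link does not need the $M$-pure symmetric pairing at all. From $\aap(S)=B$ one has $f+g_1=\max B=\sum_i\beta_ig_i$, whence (5) and then (2) follow directly from Proposition~\ref{raprmax}. Now for any box tuple $\lambda$ with value $\omega$, the complementary tuple $\beta-\lambda$ has value $(f+g_1)-\omega\in B=\aap(S)$; since $|\lambda|+|\beta-\lambda|=D$ while concatenating maximal representations of $\omega$ and $(f+g_1)-\omega$ shows $\ord(\omega)+\ord((f+g_1)-\omega)\leq\ord(f+g_1)=D$, both $\lambda$ and $\beta-\lambda$ are maximal. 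Two distinct box tuples with the same value would then, after adding a maximal representation of the complement, yield two distinct maximal representations of $f+g_1$, contradicting (2). This gives the injection and hence (4).
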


\begin{prop}
\label{Gamma}
The following assertions are equivalent for a numerical semigroup $S$: \begin{enumerate}
\item $\aap(S)= \Gamma$.
\item $g_1= \prod_{i=2}^n (\gamma_i+1)$.
\item $D=\ord(f+g_1)= \sum_{i=2}^n \gamma_i$.
\end{enumerate}
\end{prop}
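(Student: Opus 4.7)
The plan is to prove the cyclic implications $(3) \Rightarrow (1) \Rightarrow (2) \Rightarrow (3)$, closely mirroring the proof of Proposition \ref{Beta} while leveraging the uniqueness-of-maximal-representation condition built into the definition of $\gamma_i$. For the counting step $(2) \Rightarrow (1)$: since $\aap(S) \subseteq \Gamma$ and the evaluation map $\Pi \colon \prod_{i=2}^n [0, \gamma_i] \to \Gamma$ sending $\lambda \mapsto \sum \lambda_i g_i$ is surjective by definition of $\Gamma$, one has
\[
g_1 = |\aap(S)| \leq |\Gamma| \leq \prod_{i=2}^n (\gamma_i + 1).
\]
Hypothesis $(2)$ collapses the entire chain, forcing $\aap(S) = \Gamma$.

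For $(3) \Rightarrow (1)$, I would pick the lex-max maximal representation $\lambda^*$ of $f+g_1$: by Proposition \ref{raprmax} each coordinate satisfies $\lambda^*_i \leq \gamma_i$, while $(3)$ gives $\sum \lambda^*_i = D = \sum \gamma_i$, so $\lambda^* = (\gamma_2, \ldots, \gamma_n)$ and $f+g_1 = \sum \gamma_i g_i$. Then for any $\lambda$ in the grid, setting $\omega = \sum \lambda_i g_i$: if $\omega \notin \aap(S)$ one would have $\omega - g_1 \in S$, while $f+g_1 - \omega = \sum(\gamma_i - \lambda_i) g_i$ is a non-negative $\N$-linear combination of the $g_i$ and hence belongs to $S$; adding these yields $f \in S$, a contradiction. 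Hence $\omega \in \aap(S)$ and $\Gamma \subseteq \aap(S)$. The converse $(1) \Rightarrow (3)$ is immediate: $\sum \gamma_i g_i$ is the largest natural number in $\Gamma$, so under $(1)$ it equals $\max \aap(S) = f+g_1$; the representation $(\gamma_2, \ldots, \gamma_n)$ yields $\ord(f+g_1) \geq \sum \gamma_i$, and Proposition \ref{raprmax} supplies the matching upper bound.

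The delicate step is $(1) \Rightarrow (2)$, which is equivalent to showing $\Pi$ is injective under $(1)$. My intended route is indirect: prove that, under $(1)$, the element $f+g_1$ admits a \emph{unique} maximal representation in $S$. Granting this, Proposition \ref{Beta} forces $\aap(S) = B$ and $g_1 = \prod(\beta_i + 1)$; combining with $D = \sum \gamma_i = \sum \beta_i$ (from the already-established $(1) \Leftrightarrow (3)$ and Proposition \ref{Beta}(5)) and the inequality $\gamma_i \leq \beta_i$ forces $\gamma_i = \beta_i$ for every $i$, yielding $(2)$. The main obstacle is ruling out a second maximal representation $\mu$ of $f+g_1$ with $\mu_j > \gamma_j$ for some $j$: such a $\mu$ lies outside the grid $\prod[0,\gamma_i]$ and is not directly controlled by $(1)$. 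The intended workaround is to exploit the definition of $\gamma_j$, which forces $(\gamma_j+1)g_j \in \aap(S)$ to possess multiple maximal representations whenever $\gamma_j < \beta_j$, and then to combine such a redundant representation with $(\gamma_i)$ to produce a contradiction: either a grid point violating the fibre count $N(f+g_1) = N(0) = 1$ furnished by the involution $\lambda \mapsto (\gamma_i - \lambda_i)$ on the grid, or an element of $\aap(S)$ lying outside $\Gamma$, contradicting $(1)$.
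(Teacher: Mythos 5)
Your implications $(2)\Rightarrow(1)$, $(3)\Rightarrow(1)$ and $(1)\Rightarrow(3)$ are correct and complete; note for calibration that the paper gives no proof of this proposition at all (it defers to \cite[Section 2]{da-mi-sa}), and your counting argument for $(2)\Rightarrow(1)$ is essentially the one the paper recycles later in the proof of Theorem \ref{mainCI}. The genuine problem is $(1)\Rightarrow(2)$, which you flag as ``the delicate step'' and leave unresolved. It is worse than incomplete: the intermediate claim you propose to establish --- that under $(1)$ the element $f+g_1$ has a unique maximal representation, so that Proposition \ref{Beta} yields $\aap(S)=B$ and hence $\gamma_i=\beta_i$ --- is \emph{false}. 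The paper's own Example \ref{ex3}, $S=\langle 8,10,11,12\rangle$, has $\aap(S)=\Gamma\subsetneq B$ while $g_1=8=\prod_{i=2}^{4}(\gamma_i+1)$, so both $(1)$ and $(2)$ hold there; yet by the equivalence of items 1 and 2 of Proposition \ref{Beta}, $f+g_1=33$ must have more than one maximal representation, and indeed $33=10+11+12=3\cdot 11$, both of order $3$. Thus $(1)$ cannot force $\aap(S)=B$, and any route through Proposition \ref{Beta} is blocked.

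What $(1)\Rightarrow(2)$ actually requires is the strictly weaker assertion that the evaluation map $\Pi\colon\prod_{i=2}^n[0,\gamma_i]\to\Gamma$ is injective, i.e.\ that no element of $\aap(S)$ has two distinct representations \emph{both lying inside the $\gamma$-box} (in Example \ref{ex3} the offending second representation $3\cdot 11$ has $\lambda_3=3>\gamma_3=1$, so it does not threaten injectivity). Your involution $\lambda\mapsto(\gamma_i-\lambda_i)$ only controls the fibres of $\Pi$ over $0$ and over $f+g_1$, and injectivity genuinely fails without hypothesis $(1)$: in Remark \ref{codim4} one has $17=g_2+g_5=g_3+g_4$ with all exponents at most $\gamma_i=1$. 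So $(1)$ must be used in an essential way on the intermediate fibres. A repairable line of attack: given a collision, cancel common parts to get two grid representations of some $\omega_0\in\Gamma=\aap(S)$ with disjoint supports; use the inequality $\ord(\omega_0)+\ord(f+g_1-\omega_0)\le\ord(f+g_1)=\sum_{i=2}^n\gamma_i$ (available once $(1)\Rightarrow(3)$ is in place) to conclude that both colliding representations are maximal of the same order; and then contradict the defining maximality of the $\gamma_i$. That last step is precisely the content delegated to \cite[Section 2]{da-mi-sa}, and as written your proposal does not supply it.
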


Consider now the ring $A= \bigoplus_{i = 0}^D A_i \cong \dfrac{K[x_2,\ldots, x_n]}{I}$.  associated to $\aap(S)$. This ring is a Complete Intersection if and only if $I$ is minimally generated by $n-1$ elements. The next proposition is the key to characterizing when it happens.

\begin{prop}
\label{definingideal}
The defining ideal $I$ of $A$ always contains the ideal $$  \widetilde{\, I \,}= (x_i^{\gamma_i+1}-\rho_i\prod_{j\neq i}x_j^{\lambda_j}:\ i=2\dots, n )$$
where $ \rho_i=0$ if $ \beta_i=\gamma_i $ and $ \rho_i=1$ if $ \beta_i>\gamma_i $. In the second case
$(\gamma_i+1)g_i=\sum_{j\neq i}\lambda_jg_j$ are two different maximal representations of the same element of $\aap(S)$. \\
Furthermore $ I= \widetilde{\, I \,} $ if and only if $ A $ is a Complete Intersection.  
\end{prop}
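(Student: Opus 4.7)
The proposition has two parts: the inclusion $\widetilde{\, I \,}\subseteq I$ and the equivalence $I=\widetilde{\, I \,}\iff A$ is a Complete Intersection. I treat them in turn.

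For the inclusion I would check, for each $i=2,\dots,n$, that $f_i:=x_i^{\gamma_i+1}-\rho_i\prod_{j\neq i}x_j^{\lambda_j}$ represents the zero class in $A$. If $\rho_i=0$ (so $\beta_i=\gamma_i$), the definition of $\beta_i$ as a maximum forces $(\gamma_i+1)g_i$ either to lie outside $\aap(S)$ or to lie in $\aap(S)$ with $\ord((\gamma_i+1)g_i)>\gamma_i+1$; in both subcases $x_i^{\gamma_i+1}$ fails to correspond to a maximal representation of an element of $\aap(S)$ and so vanishes in $A_{\gamma_i+1}$. If $\rho_i=1$ (so $\beta_i>\gamma_i$), a short downward-closedness argument for Ap\'ery sets---from $\beta_ig_i\in\aap(S)$ with order $\beta_i$ one deduces $hg_i\in\aap(S)$ and $\ord(hg_i)=h$ for every $0\leq h\leq\beta_i$---yields $(\gamma_i+1)g_i\in\aap(S)$ with order exactly $\gamma_i+1$; the maximality of $\gamma_i$ then forces a second maximal representation $\sum_{j\neq i}\lambda_j g_j$, so that $x_i^{\gamma_i+1}$ and $\prod_{j\neq i}x_j^{\lambda_j}$ define the same class in $A_{\gamma_i+1}$ and their difference lies in $I$.

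For the equivalence, the direction $I=\widetilde{\, I \,}\Rightarrow A$ CI is immediate: $\widetilde{\, I \,}$ has $n-1$ generators and $\operatorname{ht}(I)=n-1$, since $A$ is Artinian in the polynomial ring $R'=K[x_2,\dots,x_n]$ of dimension $n-1$, which forces $A$ to be CI. For the converse my strategy is first to show that $\widetilde{\, I \,}$ is itself a Complete Intersection, using the relations $f_i$ to verify that every variable is nilpotent in $R'/\widetilde{\, I \,}$ (starting from $f_2=x_2^{\gamma_2+1}$ and $f_n=x_n^{\gamma_n+1}$, which are pure powers by Remark~\ref{gamma2n}, and iteratively rewriting $x_i^{\gamma_i+1}=\prod_{j\neq i}x_j^{\lambda_j}$ to reduce to already-nilpotent variables for the intermediate $i$). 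Consequently $\widetilde A:=R'/\widetilde{\, I \,}$ is Artinian Gorenstein of $K$-dimension $\prod_{i=2}^n(\gamma_i+1)$ and socle degree $\sum_{i=2}^n\gamma_i$. A surjection $\widetilde A\twoheadrightarrow A$ of Artinian Gorenstein local rings is an isomorphism exactly when it is injective on the socle, equivalently when the two socle degrees agree, i.e., when $D=\sum\gamma_i$. By Proposition~\ref{Gamma} this is in turn equivalent to $\aap(S)=\Gamma$ and to $g_1=\prod(\gamma_i+1)$.

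The heart of the converse is thus to show that the CI hypothesis on $A$ forces one of these equivalent conditions. My plan is a Hilbert-series and degree-matching comparison: writing $d_2,\dots,d_n$ for the minimal generator degrees of $I$, so that $\prod d_i=g_1$ and $D=\sum(d_i-1)$, I would express each $f_i$ as an $R'$-linear combination $\sum_j a_{ij}h_j$ of a minimal generating set $(h_j)$ of $I$, with homogeneous entries $a_{ij}$ of degree $\gamma_i+1-d_j$. A careful analysis of these degrees, combined with the graded surjection inequalities $\dim_K A_d\le\dim_K\widetilde A_d$, should yield that the multisets $\{d_j\}$ and $\{\gamma_i+1\}$ coincide; hence $\sum d_i=\sum(\gamma_i+1)$, so $D=\sum\gamma_i$, and the surjection $\widetilde A\to A$ is an isomorphism. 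This degree-matching step is the main obstacle and essentially amounts to the combinatorial CI-characterization of Ap\'ery-set algebras established in \cite[Section~2]{da-mi-sa}, which can alternatively be invoked directly to close the implication.
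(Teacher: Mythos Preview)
Your proposal is correct and in fact considerably more detailed than the paper's own treatment. In the paper, Proposition~\ref{definingideal} is one of several results imported from \cite[Section~2]{da-mi-sa}; the only indication of proof given there is the one-line ``key fact'' that a monomial $x^{\lambda}$ lies in $I$ precisely when $\sum\lambda_ig_i\notin\aap(S)$ or when $\lambda$ is not maximal. Your argument for the inclusion $\widetilde{\,I\,}\subseteq I$ is exactly the unpacking of that key fact into the two cases $\rho_i=0$ and $\rho_i=1$, and is correct.

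For the ``Furthermore'' part, the paper offers no argument at all beyond the citation. Your forward implication is immediate, as you say. For the converse you set up a clean framework---$\widetilde A=R'/\widetilde{\,I\,}$ is Artinian Gorenstein, and the surjection $\widetilde A\twoheadrightarrow A$ between Gorenstein Artinian rings is an isomorphism iff the socle degrees coincide, i.e.\ iff $D=\sum_i\gamma_i$---and then honestly flag the remaining degree-matching step as the substantive content, deferring to \cite{da-mi-sa}. That is an accurate assessment: this step is essentially the combinatorial characterization proved there, and the paper itself relies on the same reference. One small remark on your nilpotency argument for $\widetilde{\,I\,}$: the iterative rewriting can be made rigorous by descending induction on $i$, since in any relation $(\gamma_i+1)g_i=\sum_{j\neq i}\lambda_jg_j$ with $\sum\lambda_j=\gamma_i+1$ and $g_2<\cdots<g_n$ there must occur some $\lambda_j>0$ with $j>i$; this guarantees termination, which your sketch leaves implicit.
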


This result is proved using the definitions of $\beta_i$ and $ \gamma_i $ and considering the monomials of $A$ corresponding to the representations of the elements of $S$. Let $s \in S$ and let $ \lambda=(\lambda_1, \ldots, \lambda_n) $ be a representation of $s$, the key fact is that the monomial $ x^{\lambda}:= x_1^{\lambda_1} \cdots x_n^{\lambda_n} \in I$ if either $s \not \in \aap(S)$ or if $\lambda$ is not maximal.

\begin{theorem}  
\label{mainCI}
The following assertions hold: 
\begin{enumerate}
\item $A$ is a Complete Intersection if and only if $ \aap(S) = \Gamma $.
\item $A$ is a Complete Intersection and its defining ideal $I$ is generated by monomials if and only if $ \aap(S) = B $.
\end{enumerate}
\end{theorem}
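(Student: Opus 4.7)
The strategy is to exploit Proposition \ref{definingideal}, which identifies the Complete Intersection property of $A$ with the equality $I = \widetilde{\, I \,}$, and to compare $K$-dimensions through the natural surjection $K[x_2,\ldots,x_n]/\widetilde{\, I \,} \twoheadrightarrow A$. Since $\dim_K A = |\aap(S)| = g_1$, the task reduces to computing $\dim_K K[x_2,\ldots,x_n]/\widetilde{\, I \,}$ and relating it to the characterizations $\aap(S) = \Gamma \Leftrightarrow g_1 = \prod(\gamma_i+1)$ and $\aap(S) = B \Leftrightarrow g_1 = \prod(\beta_i+1)$ supplied by Propositions \ref{Gamma} and \ref{Beta}.

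First I would show that $\widetilde{\, I \,}$ is always a complete intersection ideal in $K[x_2,\ldots,x_n]$. Since it is generated by $n-1$ homogeneous elements in a Cohen--Macaulay ring of dimension $n-1$, this amounts to proving that the vanishing locus $V(\widetilde{\, I \,})$ in $\overline{K}^{n-1}$ reduces to the origin. By Remark \ref{gamma2n} we have $\rho_2 = \rho_n = 0$, so the generators $f_2 = x_2^{\gamma_2+1}$ and $f_n = x_n^{\gamma_n+1}$ immediately force $a_2 = a_n = 0$ at any common zero $(a_2,\ldots,a_n)$. The remaining components are handled by induction on $i = 3, \ldots, n-1$: when $\rho_i = 1$, combining the semigroup identity $(\gamma_i+1)g_i = \sum_{j \neq i} \lambda_{i,j} g_j$ with the order condition $\sum_{j \neq i} \lambda_{i,j} = \gamma_i+1$ yields $\sum_{j<i}\lambda_{i,j}(g_i-g_j) = \sum_{j>i}\lambda_{i,j}(g_j-g_i)$, and since $g_2 < g_3 < \cdots < g_n$ both sides are non-negative and must be strictly positive (otherwise all $\lambda_{i,j}$ would vanish, contradicting $\sum \lambda_{i,j} = \gamma_i+1 > 0$). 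Hence some $\lambda_{i,j} > 0$ with $j < i$; by inductive hypothesis $a_j = 0$, forcing $a_i^{\gamma_i+1} = \prod_{j \neq i} a_j^{\lambda_{i,j}} = 0$ and thus $a_i = 0$.

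With $\widetilde{\, I \,}$ established as a complete intersection with homogeneous generators of degrees $\gamma_i+1$, the standard Hilbert series formula for graded complete intersections gives $\dim_K K[x_2,\ldots,x_n]/\widetilde{\, I \,} = \prod_{i=2}^n(\gamma_i+1)$. The surjection $K[x_2,\ldots,x_n]/\widetilde{\, I \,} \twoheadrightarrow A$ then forces $\prod(\gamma_i+1) \geq g_1$, with equality precisely when $I = \widetilde{\, I \,}$, i.e. when $A$ is a Complete Intersection. Combining with Proposition \ref{Gamma} proves part (1).

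For part (2): if $A$ is a CI with $I$ monomial, then $I = \widetilde{\, I \,}$ is a monomial ideal. Since a monomial ideal contains every monomial in the support of any of its elements, a binomial generator $x_i^{\gamma_i+1} - \prod_{j \neq i} x_j^{\lambda_{i,j}}$ arising when $\rho_i = 1$ would force $x_i^{\gamma_i+1} \in I$, contradicting that this monomial represents the nonzero class of $(\gamma_i+1)g_i \in \aap(S)$ in $A$. Hence all $\rho_i = 0$, giving $\beta_i = \gamma_i$ for every $i$, so $\Gamma = B$, and part (1) then produces $\aap(S) = B$. Conversely, $\aap(S) = B$ implies through Proposition \ref{Beta} that every element of $\aap(S)$ has a unique maximal representation, forcing $\gamma_i = \beta_i$ by definition, so $\Gamma = B$, $\aap(S) = \Gamma$, $A$ is a CI by part (1), and $\widetilde{\, I \,} = I$ is generated by the pure powers $x_i^{\gamma_i+1}$. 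The main obstacle lies in the inductive step for $V(\widetilde{\, I \,}) = \{0\}$, where the semigroup identities must be combined with the order condition to guarantee a positive $\lambda_{i,j}$ with $j < i$; the rest follows by routine dimension counting once $\widetilde{\, I \,}$ is shown to be a complete intersection.
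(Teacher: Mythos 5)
Your proof is correct and follows essentially the same route as the paper: both reduce to the dimension count $g_1=\dim_K A\le \dim_K K[x_2,\ldots,x_n]/\widetilde{I\,}=\prod_{i=2}^n(\gamma_i+1)$, with equality if and only if $I=\widetilde{I\,}$, combined with Propositions \ref{definingideal}, \ref{Gamma} and \ref{Beta}. The only difference is that you explicitly verify that $\widetilde{I\,}$ is generated by a regular sequence (by showing $V(\widetilde{I\,})=\{0\}$ using the order condition $\sum_{j\neq i}\lambda_{i,j}=\gamma_i+1$ to produce a positive exponent $\lambda_{i,j}$ with $j<i$), a point the paper merely observes without proof; that verification is sound.
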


 \begin{proof}
 Observe that $ \widetilde{\, I \,} $ is an ideal of $ K[x_2,\ldots, x_n]$ generated by a regular sequence of $n-1$ elements, hence $ \dfrac{K[x_2,\ldots, x_n]}{\widetilde{\, I \,}}$ is an Artinian Complete Intersection ring and therefore it has finite dimension as $K$-vector space. \\
 Assertion 1 follows from the following inequality: $$ g_1= |\aap(S)|= \dim_K(\dfrac{K[x_2,\ldots, x_n]}{I}) \leq \dim_K(\dfrac{K[x_2,\ldots, x_n]}{\widetilde{\, I \,}}) = |\Gamma|= \prod_{i=2}^n (\gamma_i+1)$$ applying Proposition \ref{definingideal} and Proposition \ref{Gamma}. \\
 For assertion 2 just observe that if $ \aap(S)= B $, then $\gamma_i=\beta_i$ for all $i$ and use Proposition \ref{definingideal} and Proposition \ref{Beta}.
 \end{proof}

\begin{example}
\label{ex3}
Consider the numerical semigroup $ S= \langle 15, 21, 35 \rangle$. \\
Its Ap\'ery Set is $\aap(S)= \lbrace 0, 21, 35, 42, 56, 70, 63, 77, 91, 84, 98, 112, 119, 133, 154 \rbrace$. We can see that  $84=21 \cdot 4 \in \aap(S)$ and $ 105= 21 \cdot 5 \not \in \aap(S) $, then $70=35 \cdot 2 \in \aap(S)$ and $ 105= 35 \cdot 3 \not \in \aap(S) $, hence $\beta_2= \gamma_2=4$, $\beta_3= \gamma_3=2$ and we can verify that $$  B = \Big\{\sum_{i=2}^{3} \lambda_i g_i \, | \, 0 \leq \lambda_i
\leq \beta_i \Big\} = \aap(S). $$
The associated graded algebra is $$  A=\dfrac{K[y, z]}{(y^5, z^3)}. $$ and it is a monomial Complete Intersection.

\bigskip 

Consider now the semigroup $ S= \langle 8, 10, 11, 12 \rangle$ as in Example \ref{ex1}.

In this case $\aap(S)= \lbrace 0, 10, 11, 12, 21, 22, 23, 33 \rbrace$ and we can see that
 $20=10 \cdot 2 \not \in \aap(S)$, $ 24= 12 \cdot 2 \not \in \aap(S) $, $33=11 \cdot 3 \in \aap(S)$, $ 44= 11 \cdot 4 \not \in \aap(S) $ and $ 11 \cdot 2 = 10+12$. \\
 Hence $\beta_2= \gamma_2=1$, $\beta_4= \gamma_4=1$ but $1= \gamma_3 < \beta_3=3$. Therefore $$  \Gamma = \Big\{\sum_{i=2}^{4} \lambda_i g_i \, | \, 0 \leq \lambda_i
\leq \gamma_i \Big\} = \aap(S) \subsetneq B. $$
The associated graded algebra is $$ A=\dfrac{K[y, z, w]}{(y^2, z^2 - yw, w^2)}.$$ and it is a Complete Intersection but it is not monomial.
\end{example}

The next proposition is a standard result for WLP of Complete Intersection algebras that states that if there exists a minimal generator of the defining ideal having a sufficiently big degree (about half of the socle degree), then $A$ has the WLP. For the proof see \cite[3.52 and 3.54]{monografia}. 
  
 \begin{prop}
\label{feef}
 Let $A = \dfrac{K[x_1,\ldots, x_n]}{(f_1, \ldots, f_n)}$ a Complete Intersection standard graded Artinian $K$-algebra. \\ Call $d_i:=$ deg$f_i$ and assume that $d_n \geq d_i \geq 2$ for all $i$. Then, the condition $$ d_n \geq d_1 + \ldots + d_{n-1} - n $$ implies that $A$ has the WLP. 
\end{prop}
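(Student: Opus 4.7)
My plan is to verify the Weak Lefschetz criterion of Proposition \ref{weaky} for a generic linear form $L$, after reducing the question to the vanishing of a single graded piece of $A/LA$.

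First I would perform a generic $K$-linear change of the coordinates $x_1,\ldots,x_n$ and take as Lefschetz candidate $L = x_n$. Writing $\bar{f}_i := f_i(x_1,\ldots,x_{n-1},0)$ for $i=1,\ldots,n$, for a Zariski-open set of coordinate changes the forms $\bar{f}_1,\ldots,\bar{f}_{n-1}$ remain a regular sequence in $K[x_1,\ldots,x_{n-1}]$, so
$$
C := \dfrac{K[x_1,\ldots,x_{n-1}]}{(\bar{f}_1,\ldots,\bar{f}_{n-1})}
$$
is a complete intersection of codimension $n-1$ and socle degree $\sigma = \sum_{i=1}^{n-1}(d_i-1)$, and one obtains an identification $A/LA \cong C/(\bar{f}_n)$. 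The hypothesis $d_n \geq d_1+\cdots+d_{n-1}-n$ rewrites as $d_n \geq \sigma - 1$, equivalently $D \leq 2 d_n$, where $D = \sigma + d_n - 1$ is the socle degree of $A$.

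The next step is to turn the Lefschetz criterion into a statement purely about $A/LA$. Taking $K$-duals of the exact sequence
$$
0 \to (0:_A L)_{i-1} \to A_{i-1} \xrightarrow{\times L} A_i \to (A/LA)_i \to 0
$$
and using the self-duality $A_j^{\vee} \cong A_{D-j}$ coming from Gorensteiness yields $\dim_K (0:_A L)_j = \dim_K (A/LA)_{D-j}$ for every $j$. Combined with the symmetry $\dim A_j = \dim A_{D-j}$ and with Proposition \ref{weaky}, the WLP of $A$ becomes equivalent, in both parities of $D$, to the single vanishing
$$
(A/LA)_{\lfloor D/2 \rfloor + 1} \;=\; 0.
$$

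Since $A/LA = C/(\bar{f}_n)$ and $C_\sigma$ is one-dimensional, comparing $\lfloor D/2 \rfloor + 1$ with $\sigma$ under the bound $d_n \geq \sigma - 1$ leaves three cases: \textbf{(i)} $d_n \geq \sigma+1$, where $\lfloor D/2 \rfloor + 1 > \sigma$ and the component already vanishes in $C$; \textbf{(ii)} $d_n = \sigma$, where the requirement becomes $\bar{f}_n \neq 0$ in the socle $C_\sigma$; \textbf{(iii)} $d_n = \sigma - 1$, where $D = 2 d_n$ and the requirement becomes $\bar{f}_n \cdot C_1 = C_\sigma$.

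The main obstacle is case (iii), the boundary case in which $\bar{f}_n$ sits exactly one degree below the socle of $C$. It dissolves as follows: because $C$ is Gorenstein, its socle is concentrated in degree $\sigma$, so any nonzero element of $C_{\sigma-1}$ multiplies nontrivially into the one-dimensional $C_\sigma$, giving $\bar{f}_n \cdot C_1 = C_\sigma$ as soon as $\bar{f}_n \neq 0$ in $C$. Both nontrivial cases (ii) and (iii) therefore reduce to the single condition $\bar{f}_n \notin (\bar{f}_1,\ldots,\bar{f}_{n-1})$, or equivalently $f_n \notin (f_1,\ldots,f_{n-1},x_n)$ after the coordinate change. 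This is a Zariski-open condition on the linear change of coordinates, and it is nonempty because $f_n \notin (f_1,\ldots,f_{n-1})$ by the complete intersection hypothesis, combined with the fact that the intersection over all linear forms $\ell$ of the ideals $(f_1,\ldots,f_{n-1},\ell)$ equals $(f_1,\ldots,f_{n-1})$. This finishes the verification of WLP.
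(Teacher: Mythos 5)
The paper itself does not prove this proposition; it cites \cite[3.52 and 3.54]{monografia}, so there is no internal proof to compare against. Your reduction is the standard one and is carried out correctly: restricting to a general hyperplane $L=x_n$, comparing the socle degree $\sigma$ of $C=K[x_1,\ldots,x_{n-1}]/(\bar f_1,\ldots,\bar f_{n-1})$ with $\lfloor D/2\rfloor+1$, and observing (via Proposition \ref{weaky}) that the surjectivity of the single map $\times L\colon A_k\to A_{k+1}$, i.e.\ $(A/LA)_{\lfloor D/2\rfloor+1}=0$, suffices for the WLP. Your case analysis (i)--(iii) and the socle argument disposing of case (iii) are all fine.

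The gap is in the very last step. You reduce cases (ii) and (iii) to the nonemptiness of the open set $\{\ell : f_n\notin(f_1,\ldots,f_{n-1},\ell)\}$ and justify it by the claim that $\bigcap_{\ell}(f_1,\ldots,f_{n-1},\ell)=(f_1,\ldots,f_{n-1})$. That claim is false in general: already in $K[x,y]$ one has $xy\in(x^2,\ell)$ for \emph{every} linear form $\ell$ (for $\ell=x$ this is clear, and for $\ell=ax+by$ with $b\neq 0$ one has $xy\equiv -ax^2/b\equiv 0$), yet $xy\notin(x^2)$. So "$f_n\notin(f_1,\ldots,f_{n-1})$" does not by itself produce a single good $\ell$, and the nonemptiness you need is left unproved. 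The statement you actually need is true, but its proof must use more than membership in $(f_1,\ldots,f_{n-1})$: one has to exploit that $f_n$ is a nonzerodivisor on the one-dimensional Cohen--Macaulay ring $B=K[x_1,\ldots,x_n]/(f_1,\ldots,f_{n-1})$ together with the bound $d_n\leq\sigma$. For instance, if $f_n\in\ell B$ for all $\ell$ in a dense set, then differentiating $\ell\mapsto f_n/\ell$ along pencils of linear forms gives $f_nB_k\subseteq \ell^{\,k+1}B_{d_n-1}$ for all $k$, whence $\dim_K B_k=\dim_K f_nB_k\leq\dim_K B_{d_n-1}\leq \prod_{i<n}d_i-1$ for every $k$, contradicting $\dim_K B_k=\prod_{i<n}d_i$ for $k\geq\sigma$. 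Some argument of this kind (or a citation) is needed to close the proof; note that the counterexample $xy\in\bigcap_\ell(x^2,\ell)$ is exactly an element that is a zerodivisor on $K[x,y]/(x^2)$, which is why the nonzerodivisor hypothesis cannot be omitted.
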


\begin{corollary}
\label{gammagrande}
Let $A= \bigoplus_{i = 0}^D A_i$ be a standard graded algebra associated to the Ap\'ery Set of a numerical semigroup. If $A$ is a Complete Intersection and there exist $\gamma_i \geq \dfrac{D-2}{2}$, then $A$ has the WLP.
\end{corollary}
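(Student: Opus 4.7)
The plan is to reduce the statement to Proposition \ref{feef} by reading off the degrees of the generators of the defining ideal of $A$ from the structural results of Section 4. Since $A$ is assumed to be a Complete Intersection, Theorem \ref{mainCI}(1) yields $\aap(S)=\Gamma$, so by Proposition \ref{Gamma} the socle degree satisfies $D=\sum_{i=2}^{n}\gamma_i$. Proposition \ref{definingideal} then realises $A$ as $K[x_2,\ldots,x_n]/\widetilde{I}$, a CI in $n-1$ variables with exactly $n-1$ generators $f_2,\ldots,f_n$, where each $f_i$ has degree $\gamma_i+1$: this is transparent when $\rho_i=0$, and for $\rho_i=1$ it follows because the two monomials $x_i^{\gamma_i+1}$ and $\prod_{j\neq i}x_j^{\lambda_j}$ are maximal representations of the same element of $\aap(S)$ and hence share the common order $\gamma_i+1$.

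After this preparation I would reorder the generators so that the largest $\gamma_i$, call it $\gamma^{*}$, appears last; by hypothesis $\gamma^{*}\geq \frac{D-2}{2}$. Before applying Proposition \ref{feef} I would check its ancillary assumption $d_i\geq 2$, which amounts to $\gamma_i\geq 1$ for every $i=2,\ldots,n$. This is automatic: each minimal generator $g_i$ of $S$ lies in $\aap(S)$ (since $g_i-g_1\notin S$ by minimality of $g_i$) and admits $1\cdot g_i$ as its unique maximal representation of order $1$, so $\gamma_i\geq 1$.

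The remaining step is a direct arithmetic check. The sum of all generator degrees is
$$ \sum_{i=2}^{n}(\gamma_i+1)=D+(n-1), $$
so the key inequality of Proposition \ref{feef}, applied to the $n-1$ variable presentation with largest degree $d^{*}=\gamma^{*}+1$, becomes
$$ \gamma^{*}+1 \;\geq\; \bigl(D+(n-1)-(\gamma^{*}+1)\bigr)-(n-1) \;=\; D-\gamma^{*}-1, $$
which simplifies to $2\gamma^{*}\geq D-2$, exactly the standing hypothesis. Thus Proposition \ref{feef} applies and $A$ has the WLP.

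I do not anticipate any serious obstacle: the whole argument is a translation of the numerical data attached to $\Gamma$ into the format of Proposition \ref{feef}. The one point where care is required is the degree bookkeeping inside $\widetilde{I}$ when some $\rho_i=1$; but as noted above, equality of orders of the two maximal representations forces both monomials of that generator to have the same degree $\gamma_i+1$, after which the proof is a one-line computation.
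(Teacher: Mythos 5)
Your proof is correct and follows essentially the same route as the paper: identify $D=\sum_{i=2}^n\gamma_i$ via Theorem \ref{mainCI} and Proposition \ref{Gamma}, read off $d_i=\gamma_i+1$ from Proposition \ref{definingideal}, and verify the inequality of Proposition \ref{feef}, which reduces to $2\gamma^{*}\geq D-2$. Your extra checks (that both monomials of a binomial generator have degree $\gamma_i+1$, and that $\gamma_i\geq 1$ so $d_i\geq 2$) are details the paper leaves implicit, and they are argued correctly.
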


\begin{proof}
Since $A$ is a Complete Intersection, $D=\sum_{i=2}^n \gamma_i$ by Proposition \ref{Gamma} and Theorem \ref{mainCI}.
Assume, by changing the order of the generators of the defining ideal of $A$, that $ \gamma_n \geq \gamma_i $ for all $ i $. Using notation of Proposition \ref{feef} we have $d_i= \gamma_i +1$. Thus $ W:= d_n - (d_2 + \ldots + d_{n-1} - n+1) = \gamma_n +1 - (\sum_{i=2}^n (\gamma_i+1)- \gamma_n - 1-n+1) =  \gamma_n +1 -(D+n-1 -\gamma_n -n)= 2\gamma_n -D +2 $. By assumption $W \geq 0$ and hence by Proposition \ref{feef}, $A$ has the WLP.
\end{proof}

\begin{theorem}
\label{teo2}
Let $A= \bigoplus_{i = 0}^D A_i$ be the graded algebra associated to the Ap\'ery Set of a numerical semigroup. If $A$ is a Complete Intersection, then $A$ satisfies the Quotient Condition. Moreover the corresponding colon ideal defining $A$ as a quotient of an algebra with the WLP is generated by monomials.
\end{theorem}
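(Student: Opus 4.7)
The plan is to exhibit the ambient CI $G$ explicitly by ``inflating'' a pure-power generator of the defining ideal of $A$ until the WLP-criterion of Proposition \ref{feef} is forced to apply.

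Because $A$ is CI, Theorem \ref{mainCI} yields $\aap(S) = \Gamma$, and Proposition \ref{definingideal} then gives
$$A \cong \frac{K[x_2,\ldots,x_n]}{(f_2,\ldots,f_n)}, \qquad f_i = x_i^{\gamma_i+1}-\rho_i\prod_{j\neq i}x_j^{\lambda_j^{(i)}}.$$
Remark \ref{gamma2n} forces $\gamma_n=\beta_n$, whence $\rho_n=0$ and $f_n=x_n^{\gamma_n+1}$ is a pure power, so I single out $x_l:=x_n$. If $\gamma_n\ge(D-2)/2$, then Corollary \ref{gammagrande} already gives the WLP for $A$ and the Quotient Condition holds trivially with $C=0$ and $G=A$. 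Otherwise I would pick an integer $C\ge \max\{1,\,D-2\gamma_n-2\}$ and set
$$G:=\frac{K[x_2,\ldots,x_n]}{J}, \qquad J:=(f_2,\ldots,f_{n-1},\,x_n^{\gamma_n+1+C}).$$

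The next step is to verify that $G$ is an Artinian Gorenstein CI enjoying the WLP. Since $f_2,\ldots,f_{n-1},x_n^{\gamma_n+1}$ is a reordering of the regular sequence defining $A$, the quotient $R:=K[x_2,\ldots,x_n]/(f_2,\ldots,f_{n-1})$ is a one-dimensional Cohen--Macaulay ring on which $x_n^{\gamma_n+1}$, and hence $x_n$ itself, is a non-zero-divisor; consequently $x_n^{\gamma_n+1+C}$ is also a non-zero-divisor on $R$ and $J$ is a CI. For the WLP I would apply Proposition \ref{feef} directly to $G$: the enlarged degree $d_n':=\gamma_n+1+C$ is the largest generator degree, and the inequality $d_n'\ge \sum_{i=2}^{n-1}(\gamma_i+1)-(n-1)$ simplifies via $D=\sum_{i=2}^{n}\gamma_i$ to the very condition $C\ge D-2\gamma_n-2$ we imposed.

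Finally, I would identify $A$ with $G/(0:_G x_n^C)$ and observe that the colon ideal is generated by the single monomial $\overline{x_n^{\gamma_n+1}}$. The inclusion $\overline{x_n^{\gamma_n+1}}\in(0:_G x_n^C)$ is immediate from $x_n^C\cdot x_n^{\gamma_n+1}=x_n^{\gamma_n+1+C}\in J$. For the reverse inclusion, which is the main technical point, given $x_n^C g\in J$ I would reduce modulo $(f_2,\ldots,f_{n-1})$ to obtain $x_n^Cg\equiv a\,x_n^{\gamma_n+1+C}$ in $R$ for some $a\in R$, and then cancel the non-zero-divisor $x_n^C$ to conclude $g\equiv a\,x_n^{\gamma_n+1}$ in $R$, i.e.\ $g\in(f_2,\ldots,f_{n-1},x_n^{\gamma_n+1})$. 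Combining, $G/(0:_G x_n^C)= K[x_2,\ldots,x_n]/(f_2,\ldots,f_{n-1},x_n^{\gamma_n+1})= A$ and the colon ideal is monomial. The critical point of the argument, and the place where Remark \ref{gamma2n} is indispensable, is that $f_n$ must be a pure power: were $\rho_n=1$, inflating its exponent would destroy the identification of $A$ as a colon quotient of $G$ and the cancellation step above would break down.
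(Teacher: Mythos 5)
Your proof is correct and follows essentially the same strategy as the paper: in the non-trivial case you inflate the pure-power generator of the defining ideal (you use $x_n^{\gamma_n+1}$, the paper uses $x_2^{\gamma_2+1}$; both are pure powers by Remark \ref{gamma2n}) to a high enough power that the degree-gap criterion of Proposition \ref{feef} forces the WLP on the enlarged Complete Intersection $G$, and then identify $A$ as $G/(0:_G x_l^C)$ with the colon ideal principal and monomial. Your explicit non-zero-divisor cancellation justifying $(0:_G x_n^C)=\overline{x_n^{\gamma_n+1}}\,G$ is a welcome elaboration of a step the paper states without detail.
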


\begin{proof}
By Proposition \ref{definingideal}, the defining ideal of $A$ is $$(f_2, \ldots, f_n):=(x_i^{\gamma_i+1}-\rho_i\prod_{j\neq i}x_j^{\lambda_j}:\ i=2\dots, n )$$ and we recall that $\rho_2=\rho_n=0$ by Remark \ref{gamma2n}. \\
By Corollary \ref{gammagrande}, if there exists $\gamma_i \geq t= \frac{D-2}{2}$, then $A$ has the WLP and hence it satisfies the Quotient Condition with $C=0$. Hence assume $\gamma_i < t $ for every $i$ and consider the Artinian Complete Intersection ring $$  B:= \dfrac{K[x_2,\ldots, x_n]}{(x_2^N, f_3, \ldots, f_n)}$$ with $N \geq D- \gamma_2$. By Proposition \ref{Gamma}, $D=\sum_{i=2}^n \gamma_i$ and therefore the socle degree of $B$ is by construction $E=D- \gamma_2+N-1$. Now $$ \dfrac{E}{2} = \dfrac{D-\gamma_2+N-1}{2} \leq \dfrac{2N-1}{2} < N$$ and thus, Corollary \ref{gammagrande} implies that $B$ has the WLP. \\
Call $ T:= \dfrac{K[x_2,\ldots, x_n]}{(f_3, \ldots, f_n)} $. Since for every $i \neq 2$, $\deg(x_2, f_i) \leq \gamma_2$, we have $ A \cong \dfrac{T}{(x_2^{\gamma_2+1})} $ and $ B \cong \dfrac{T}{(x_2^{N})} $. Hence $$ A \cong \dfrac{B}{(0:_B x_2^{N-\gamma_2-1})} $$ satisfies the Quotient Condition. 

What written above implies that the ideal $ (0:_B x_2^{N-\gamma_2-1}) $ is the principal ideal $x_2^{\gamma_2 +1}B$.
%To prove the claim of this theorem we need to show that $\dfrac{B}{(0:_B x_2^{C})}$ has the WLP for $ 1 \leq C \leq N-\gamma_2-1 $. Use induction on $C$: when $C=1$ this is Theorem \ref{main1}. \\
%For $C > 1$, we can write $$  \dfrac{B}{(0 :_B x_2^C)}= \dfrac{\dfrac{B}{(0 :_B x_2^{C-1})}}{(0 :_{B^{\prime}} \overline{x_2})} $$ with $B^{\prime}:=\dfrac{B}{(0 :_B x_2^{C-1})}$. \\
%Now the result follows using Theorem \ref{main1} and the inductive hypothesis.
\end{proof}

%Now we conclude applying Theorem(\ref{main1}) inductively on $ 1 \leq C \leq N-\gamma_2-1 $ as done before in Thorem(\ref{teo1}).

\bigskip
%\bigskip

\section{Codimension 3 Algebras}

In this section we study the Gorenstein graded algebras associated to the Ap\'ery Set of numerical semigroups in low codimension. 

Let $S $ be an $M$-pure symmetric numerical semigroup and let $A$ be the graded algebra associated to $\aap(S)$. Observe that if $S$ is generated by $n$ elements, then the codimension of the ring $A$ is $n-1$. In \cite{da-mi-sa} is proved that, when $S$ is generated by $3$ elements, it is $M$-pure symmetric if and only if $\aap(S)$ is equal to the hyper-rectangle $B$ and hence $A$ is Gorenstein if and only if it is a monomial Complete Intersection. In this case it is known that $A$ has the SLP.

A more interesting case that we are going to discuss is when $S$ is generated by $4$ natural numbers $g_1, g_2, g_3, g_4$. Write $$ A \cong \dfrac{K[y, z, w]}{I}. $$ In this context $A$ has codimension $3$ and, if it is a Complete Intersection, it has the WLP. But we recall that in general it is not known if a Gorenstein Artinian algebra of codimension $3$ has the WLP. 

Therefore for the rest of the section we assume that $A$ is not a Complete Intersection. This means by Theorem \ref{mainCI} that $\aap(S)$ is properly contained in the hyper-rectangle $\Gamma$. We want to characterize the defining ideal of $A$. In order to do this, we need some more results.

\begin{lemma}
\label{gammaminore}
Let $S = \langle g_1, g_2, g_3, g_4 \rangle$ be an $M$-pure symmetric numerical semigroup and assume $ \aap(S) \subsetneq \Gamma$. Then $\gamma_3 < \beta_3$.
\end{lemma}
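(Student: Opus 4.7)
I would prove the contrapositive: assume $\gamma_3 = \beta_3$, and show $\aap(S) = \Gamma$. By Remark \ref{gamma2n} we also have $\gamma_2 = \beta_2$ and $\gamma_4 = \beta_4$, hence $\gamma_i = \beta_i$ for all $i \in \{2,3,4\}$, so $\Gamma = B$. By Proposition \ref{definingideal}, every $\rho_i$ vanishes, making $\widetilde{I} = (y^{\beta_2+1}, z^{\beta_3+1}, w^{\beta_4+1})$ a monomial complete intersection ideal contained in $I$. In particular, $A$ is a graded quotient of the Gorenstein monomial CI $C := K[y,z,w]/\widetilde{I}$, which has socle degree $\beta_2 + \beta_3 + \beta_4$. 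By Theorem \ref{mainCI}, the conclusion $\aap(S) = \Gamma$ is equivalent to $A$ being a complete intersection, which in turn is equivalent to $I = \widetilde{I}$.

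The plan is to reduce this to proving $D = \beta_2 + \beta_3 + \beta_4$: once this is known, Proposition \ref{Beta}(5) yields $\aap(S) = B = \Gamma$, completing the argument. The key inputs are the $M$-pure symmetric property and the uniqueness of the maximal representation of each pure element $\beta_i g_i$, guaranteed by $\gamma_i = \beta_i$. For each $i$, the element $\omega_m - \beta_i g_i$ lies in $\aap(S)$, and by the $M$-pure symmetric condition its order is $D - \beta_i$. Given any maximal representation $(\nu_2, \nu_3, \nu_4)$ of $\omega_m - \beta_i g_i$, adding $\beta_i$ to its $i$-th coordinate produces a maximal representation of $\omega_m$. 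Proposition \ref{raprmax} then forces $\nu_i + \beta_i \leq \beta_i$, so $\nu_i = 0$. Thus every maximal representation of $\omega_m - \beta_i g_i$ avoids the $i$-th coordinate.

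Combining these facts (specific to the $n = 4$ case, where only three indices $2, 3, 4$ appear), the maximal representations of $\omega_m - \beta_2 g_2$ sit in $\{0\} \times [0, \beta_3] \times [0, \beta_4]$, and those of $\omega_m - \beta_4 g_4$ sit in $[0, \beta_2] \times [0, \beta_3] \times \{0\}$. I would then argue that $\omega_m - \beta_2 g_2 - \beta_4 g_4$ lies in $\aap(S)$ and must be a multiple of $g_3$ under any maximal representation, namely $(0, \beta_3, 0)$, forcing $D - \beta_2 - \beta_4 = \beta_3$. Equivalently, $\omega_m$ admits the maximal representation $(\beta_2, \beta_3, \beta_4)$, and so $D = \beta_2 + \beta_3 + \beta_4$ as required.

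The main obstacle is this final combinatorial step: one must check that the intermediate element $\omega_m - \beta_2 g_2 - \beta_4 g_4$ genuinely lies in $\aap(S)$ (rather than escaping into $S \setminus \aap(S)$) and that the uniqueness of the maximal representations propagates correctly along the chain $\omega_m \to \omega_m - \beta_2 g_2 \to \omega_m - \beta_2 g_2 - \beta_4 g_4$. The codimension-3 hypothesis $n = 4$ is essential here, because after peeling off the extremal coefficients $\beta_2$ and $\beta_4$ only the single variable $z$ remains to absorb the residual order, so the assumption $\gamma_3 = \beta_3$ directly pins down the value of $D$; in higher codimension, several middle variables would survive and this direct pinning fails.
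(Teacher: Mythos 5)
Your contrapositive reduction to the single equality $D=\beta_2+\beta_3+\beta_4$ is sound (via Remark \ref{gamma2n}, Proposition \ref{Beta} and Theorem \ref{mainCI}), and your intermediate observation is correct: by $M$-pure symmetry $\ord(\omega_m-\beta_i g_i)=D-\beta_i$, so adding $\beta_i$ to the $i$-th coordinate of any maximal representation of $\omega_m-\beta_i g_i$ gives a maximal representation of $\omega_m$, and Proposition \ref{raprmax} forces that coordinate to vanish. But the final step contains a genuine gap, and it is larger than the one you flag. First, $\omega_m-\beta_2 g_2-\beta_4 g_4$ need not lie in $S$ at all a priori: you only know $\omega_m-\beta_2 g_2=\nu_3 g_3+\nu_4 g_4$ with $\nu_4\le\beta_4$, and nothing forces $\nu_4=\beta_4$, so subtracting $\beta_4 g_4$ is unjustified. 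Second, and more seriously, even granting that this element lies in $\aap(S)$ with maximal representation $(0,\nu_3,0)$, your argument only yields $\nu_3\le\beta_3$, i.e.\ $D\le\beta_2+\beta_3+\beta_4$ --- an inequality that already follows from Proposition \ref{raprmax} applied to any maximal representation of $\omega_m$ and holds for \emph{every} semigroup, with no hypothesis on $\gamma_3$. What Proposition \ref{Beta}(5) requires is the reverse inequality, and the hypothesis $\gamma_3=\beta_3$ (uniqueness of the maximal representation of $\beta_3 g_3$) is never invoked in your argument in a way that could produce a lower bound on $D$: every appeal to Proposition \ref{raprmax} only caps coordinates from above.

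To close the gap one is forced back to the direct argument: if $D<\beta_2+\beta_3+\beta_4$, then by Proposition \ref{Beta} the element $\omega_m$ has two distinct maximal representations; cancelling their common part yields an element of $\aap(S)$ with two maximal representations of equal order and disjoint supports, which for $n=4$ and $g_2<g_3<g_4$ can only have the form $\lambda_3 g_3=\mu_2 g_2+\mu_4 g_4$; this contradicts $\gamma_3=\beta_3$. That is precisely the paper's (direct, three-line) proof of the lemma, so the contrapositive detour through the socle degree ultimately buys nothing. I recommend abandoning the reduction to $D=\beta_2+\beta_3+\beta_4$ and arguing directly from the existence of a second maximal representation of $\omega_m$.
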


\begin{proof}
Since $\aap(S) \subsetneq \Gamma \subseteq B$, by Proposition \ref{Beta}, the maximal element of $\aap(S)$, $f+g_1$ has more than one maximal representation. Subtracting common terms for two of these representations we obtain a double representation of an element of $\aap(S)$ that has to be necessarily of the form $ \lambda_3 g_3= \mu_2 g_2 + \mu_4 g_4 $ (since $g_2 < g_3 < g_4$) and the two different representations have the same order. This implies $\gamma_3 < \beta_3$ by definition.
\end{proof}

\begin{remark}
\label{codim4}
The fact that $A$ is not a Complete Intersection implies that there exists $\gamma_i < \beta_i$ for some $i=2, \ldots, n$, is not true in codimension higher than $3$. This is due to the fact that there may appear double maximal representations of elements of $\aap(S)$ like $\sum_{i \in \mathcal{I}} \lambda_i g_i = \sum_{j \in \mathcal{J}} \mu_j g_j$ with $ \mathcal{I}, \mathcal{J} \subseteq \{ 2, \ldots, n \} $, $  \mathcal{I} \cap \mathcal{J}= \emptyset $ and $  |\mathcal{I}|, |\mathcal{J}| \geq 2 $. For example consider the numerical semigroup $ S= \langle g_1, g_2, g_3, g_4, g_5 \rangle = \langle 6, 7, 8, 9, 10 \rangle$. The Ap\'ery Set of $S$ is $ \aap(S)= \lbrace 0, 7, 8, 9, 10, 17 \rbrace $ and $S$ is $M$-pure symmetric. Observe that $2g_i \not \in \aap(S)$ for every $i=2,3,4,5$ and hence $\gamma_i=\beta_i=1$. Moreover $15 = 7+8= 6+9 \in \Gamma \setminus \aap(S)$. So in this case we have $\aap(S) \subsetneq \Gamma = B$ and $A$ is not a Complete Intersection.
\end{remark}

\begin{corollary}
\label{tilde}
There exists one element $s$ of the Ap\'ery Set of the numerical semigroup $S$ which has the double representation $$ s= (\gamma_3+1) g_3= \mu_2 g_2 + \mu_4 g_4 $$ with $ 1 \leq \mu_2 \leq \gamma_2 $, $  1 \leq \mu_4 \leq \gamma_4 $ and $\mu_2+\mu_4= \gamma_3 +1$. Hence the ideal $$\widetilde{\, I \,}=(y^{\gamma_2+1}, z^{\gamma_3+1}-y^{\mu_2}w^{\mu_4},  w^{\gamma_4+1})$$ is properly contained in $I$.
\end{corollary}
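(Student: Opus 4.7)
By Lemma \ref{gammaminore} we have $\gamma_3<\beta_3$, so the plan is to locate the desired double representation at the level $h=\gamma_3+1$, where the uniqueness property defining $\gamma_3$ must first fail. The preliminary step is a propagation observation: for every $h\le\beta_3$ the element $hg_3$ lies in $\aap(S)$ and has $\ord(hg_3)=h$. Membership follows from the contrapositive (if $hg_3-g_1\in S$, adding $(\beta_3-h)g_3$ would put $\beta_3 g_3-g_1$ in $S$), while $\ord(hg_3)>h$ would combine with $(\beta_3-h)g_3$ to contradict $\ord(\beta_3 g_3)=\beta_3$. A parallel argument shows that $hg_3$ has a unique maximal representation for every $h\le\gamma_3$: a second maximal representation of $hg_3$ could be augmented by $(\gamma_3-h)$ copies of $g_3$ into a second maximal representation of $\gamma_3 g_3$, contradicting the defining property of $\gamma_3$.

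Apply these facts at $h=\gamma_3+1$. Then $(\gamma_3+1)g_3\in\aap(S)$ with $\ord((\gamma_3+1)g_3)=\gamma_3+1$, and because $\gamma_3+1>\gamma_3$ this element must carry a maximal representation $(\mu_1,\mu_2,\mu_3,\mu_4)$ different from the pure one $(0,0,\gamma_3+1,0)$. Membership in $\aap(S)$ gives $\mu_1=0$. The key step is to show $\mu_3=0$: assuming $\mu_3\ge 1$ and cancelling $\mu_3 g_3$ from both sides yields
\[
(\gamma_3+1-\mu_3)g_3 \;=\; \mu_2 g_2+\mu_4 g_4,
\]
with coefficient sum on the right equal to $\gamma_3+1-\mu_3=\ord((\gamma_3+1-\mu_3)g_3)$, so the right-hand side is a maximal representation. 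Since $\gamma_3+1-\mu_3\le\gamma_3$, the uniqueness propagation forces this to coincide with the pure representation, whence $\mu_2=\mu_4=0$ and $\mu_3=\gamma_3+1$, contradicting our choice.

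With $\mu_3=0$ in hand, the relation $\mu_2+\mu_4=\gamma_3+1$ together with $g_2<g_3<g_4$ forces both $\mu_2\ge 1$ and $\mu_4\ge 1$, while Proposition \ref{raprmax} combined with $\gamma_2=\beta_2$ and $\gamma_4=\beta_4$ from Remark \ref{gamma2n} gives $\mu_2\le\gamma_2$ and $\mu_4\le\gamma_4$. The containment $\widetilde{\,I\,}\subseteq I$ is then the direct specialization of Proposition \ref{definingideal} to $n=4$, with $\rho_2=\rho_4=0$ (because $\beta_2=\gamma_2$, $\beta_4=\gamma_4$) and $\rho_3=1$ with exponents $\mu_2,\mu_4$ just produced. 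The strict inclusion $\widetilde{\,I\,}\subsetneq I$ follows from the second half of Proposition \ref{definingideal} together with the standing assumption that $A$ is not a Complete Intersection.

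The main obstacle is the middle step, namely the argument that $\mu_3=0$. This rests entirely on the uniqueness-propagation lemma (uniqueness at $\gamma_3$ implies uniqueness at every $h\le\gamma_3$), which is the only place where the fine structure of the $M$-pure symmetric lattice enters; everything else is a bookkeeping exercise relying on results already established in the excerpt.
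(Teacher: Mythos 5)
Your proof is correct and follows essentially the same route as the paper, which derives the corollary directly from Lemma \ref{gammaminore}, Proposition \ref{definingideal} (whose statement already asserts that $\beta_i>\gamma_i$ forces a double maximal representation of $(\gamma_i+1)g_i$ in $\aap(S)$) and Remark \ref{gamma2n}. The only difference is that you re-derive that embedded fact from scratch via the uniqueness-propagation argument, making the proof self-contained where the paper simply cites it.
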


\begin{proof}
This follows immediately from the previous Lemma \ref{gammaminore} and from Proposition \ref{definingideal} and Remark(\ref{gamma2n}). Notice that by Definition \ref{gammabetadef}, this element $ (\gamma_3+1) g_3 $ is the minimal in $\aap(S)$ with a double representation. The containment $ \widetilde{\, I \,} \subsetneq I $ is proper since $A$ is not a Complete Intersection.
\end{proof}

\begin{definition}
\label{GG}
Define the ring $G:=\dfrac{K[y, z, w]}{\widetilde{\, I \,}}.$ \\
We observe that $G$ is a standard graded Artinian Complete Intersection algebra and $$G=\langle x^{\lambda} \ | \sum_{i = 2}^4 \lambda_i g_i \in \Gamma \mbox{ and } \lambda \mbox{ is maximal }\rangle_K.$$ Therefore $A$ is isomorphic to a $K$-vector subspace of $G$. \\
As rings $A$ is a quotient of $G$ and we write $$ A \cong \dfrac{G}{J}$$ where $J:= \widetilde{\, I \,}/I.$
\end{definition}

We need to find which are the generators of the ideal $ J $ and for this reason, we want to characterize the elements of the set $ \Gamma \setminus \aap(S) $. Write $G=\bigoplus_{i = 0}^D G_i$ and $ A=\bigoplus_{i = 0}^{D-C} A_i $ for a positive integer $C$. This two rings are equal if and only if $A$ is a Complete Intersection and if and only if $C=0$. The first "if and only if" follows from Theorem \ref{mainCI}. We are going to explain the second one in the next proposition.

Call $\omega_D= \gamma_2 g_2+\gamma_3 g_3+\gamma_4 g_4$ the maximal element of $\Gamma$ and $\omega_E= f+g_1$ the maximal element of $\aap(S)$.

\begin{prop}
\label{GmenoA}
The following statements hold: 
\begin{enumerate}
\item $ \omega_E = \omega_D - Cg_3 $. 
 \item $ C \leq \gamma_3. $ 
 \item $ \Gamma \setminus \aap(S) = \{ \omega \in \Gamma \mbox{ : } \omega + Cg_3 \not \in \Gamma \}$.
\end{enumerate}
 \end{prop}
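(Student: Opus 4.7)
The plan for (1) is to exhibit a maximal representation of $\omega_E$ of the form $\gamma_2g_2+\mu_3g_3+\gamma_4g_4$; then $C:=\gamma_3-\mu_3$ satisfies $\omega_E=\omega_D-Cg_3$. First I would show $\omega_E$ has a maximal representation with $g_2$-coefficient equal to $\gamma_2$: since $\gamma_2g_2\in\aap(S)$, $M$-pure symmetry gives $\omega_E-\gamma_2g_2\in\aap(S)$, whose lex-maximum maximal representation $(\mu_2,\mu_3,\mu_4)$ (Proposition~\ref{raprmax}) satisfies $\mu_i\le\gamma_i$. Adding back $\gamma_2g_2$ produces a maximal representation of $\omega_E$ with $g_2$-coefficient $\gamma_2+\mu_2$, and Proposition~\ref{raprmax} combined with $\beta_2=\gamma_2$ (Remark~\ref{gamma2n}) forces $\gamma_2+\mu_2\le\gamma_2$, so $\mu_2=0$. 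A symmetric argument with $\gamma_4g_4$ produces a second maximal representation $\omega_E=\alpha_2g_2+\alpha_3g_3+\gamma_4g_4$ with $\alpha_2\le\gamma_2$.

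The main obstacle is to merge these two representations into one with $\lambda_2=\gamma_2$ and $\lambda_4=\gamma_4$ simultaneously. Equating them yields the semigroup identity $(\gamma_2-\alpha_2)g_2+(\mu_3-\alpha_3)g_3=(\gamma_4-\mu_4)g_4$, and the two sides automatically have the same order (both arising from maximal representations of $\omega_E$ with total order $\ord(\omega_E)$). The crucial tool is that $\gamma_2=\beta_2$ and $\gamma_4=\beta_4$ force every $hg_2$ with $h\le\gamma_2$, and every $hg_4$ with $h\le\gamma_4$, to admit a \emph{unique} maximal representation: otherwise non-uniqueness at some such $h$ would propagate upward by repeatedly adding $g_2$ (or $g_4$) and contradict uniqueness at $h=\beta_i$. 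A short case split on the sign of $\mu_3-\alpha_3$ then shows that $\mu_4<\gamma_4$ would exhibit a second maximal representation of either $(\gamma_4-\mu_4)g_4$ (case $\mu_3\ge\alpha_3$) or $(\gamma_2-\alpha_2)g_2$ (case $\mu_3<\alpha_3$), a contradiction. Hence $\mu_4=\gamma_4$, proving (1) with $C:=\gamma_3-\mu_3$; statement (2) then follows immediately from $0\le\mu_3\le\gamma_3$.

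For (3), I would reduce to two equivalences. The first, combinatorial: for $\omega\in\Gamma$ written as $\omega=ag_2+bg_3+cg_4$ with $0\le a\le\gamma_2$, $0\le b\le\gamma_3$, $0\le c\le\gamma_4$, one has $\omega+Cg_3\in\Gamma\Longleftrightarrow\omega_E-\omega\in\Gamma$. Applying the relation $(\gamma_3+1)g_3=\mu_2g_2+\mu_4g_4$ once reduces the formal triple $(a,b+C,c)$ for $\omega+Cg_3$ to $(a+\mu_2,b+C-\gamma_3-1,c+\mu_4)$, and (using part (1)) reduces the formal triple $(\gamma_2-a,\gamma_3-C-b,\gamma_4-c)$ for $\omega_E-\omega$ to $(\gamma_2-a-\mu_2,2\gamma_3-C-b+1,\gamma_4-c-\mu_4)$; both triples fall within the bounds under the same disjunction ``$b\le\gamma_3-C$, or ($b>\gamma_3-C$, $a+\mu_2\le\gamma_2$, and $c+\mu_4\le\gamma_4$)''. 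The second equivalence is: for $\omega\in\Gamma$, $\omega_E-\omega\in\Gamma\Longleftrightarrow\omega\in\aap(S)$. Direction $(\Leftarrow)$ is $M$-pure symmetry. For $(\Rightarrow)$, if $\omega_E-\omega\in\Gamma\setminus\aap(S)$ then $(\omega_E-\omega)-g_1\in S$, so $\omega_E-g_1=\omega+\bigl((\omega_E-\omega)-g_1\bigr)\in S$, contradicting $\omega_E\in\aap(S)$; hence $\omega_E-\omega\in\aap(S)$, and $M$-pure symmetry once more gives $\omega\in\aap(S)$. Composing the two equivalences yields the set equality (3).
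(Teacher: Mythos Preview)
Your argument is essentially correct and shares the same architecture as the paper's proof, but the execution differs in a few places worth noting.

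For (1) you are considerably more careful than the paper. Both arguments begin by using $M$-pure symmetry with $\gamma_2g_2$ (respectively $\gamma_4g_4$) to obtain a maximal representation of $\omega_E$ with $g_2$-coefficient equal to $\gamma_2$; the paper then asserts, without further explanation, that this forces $\omega_E+g_2\notin\Gamma$, whereas you explicitly merge the two representations via your uniqueness lemma for $hg_2$ and $hg_4$ with $h\le\gamma_i$. Your case split is slightly mis-phrased --- the case $\mu_3<\alpha_3$ is in fact impossible regardless of whether $\mu_4<\gamma_4$ --- but the conclusion $\mu_4=\gamma_4$ follows either way.

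For (3) your two-equivalence decomposition matches the paper's logic, but with the roles reversed in elegance. Your second equivalence ($\omega_E-\omega\in\Gamma\Longleftrightarrow\omega\in\aap(S)$) is cleaner than the paper's version: you observe directly that $\omega_E-\omega\notin\aap(S)$ would give $\omega_E-g_1\in S$, while the paper detours through an auxiliary element $\bar\omega=\omega-g_j$. On the other hand, your first equivalence is over-engineered and leaves a small gap: you verify that the displayed disjunction \emph{suffices} for each of $\omega+Cg_3$ and $\omega_E-\omega$ to lie in $\Gamma$ (via one of the two specific triples), but you never argue \emph{necessity}, i.e.\ that no other representation could place them in $\Gamma$ when the disjunction fails. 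The paper sidesteps this entirely by the one-line observation that $\omega+Cg_3=\omega_D-(\omega_E-\omega)$ and that $\Gamma$ is symmetric under $\upsilon\mapsto\omega_D-\upsilon$; this immediately yields $\omega+Cg_3\in\Gamma\Longleftrightarrow\omega_E-\omega\in\Gamma$ and makes the explicit triple-tracking unnecessary. Replacing your combinatorial verification with this symmetry closes the gap and shortens the argument.
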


\begin{proof}
1. We set, for each $ \omega \in \Gamma $, the element $ \omega^{\prime}:= \omega_E - \omega $. Clearly when $\omega \in \aap(S)$, $ \omega^{\prime} \in \aap(S) $ and $\ord(\omega)+\ord(\omega^{\prime})= \ord(\omega_E)$ because $S$ is $M$-pure symmetric. Instead, when $\omega \not \in \aap(S)$, also $\omega^{\prime} \not \in \aap(S)$. Also the set $\Gamma$ is clearly symmetric by construction: $\omega_D - \upsilon \in \Gamma$ for each $ \upsilon \in \Gamma $ and also in this case an analogous equality for the orders of the elements holds, meaning that $\ord(\upsilon)+\ord(\omega_D - \upsilon)= \ord(\omega_D)$. 

By way of contradiction assume $\omega_E= \omega_D - g_2 - \upsilon$ with $ \upsilon \in \Gamma $. Thus $ \omega_E + g_2= \omega_D - \upsilon \in \Gamma$. But, by definition $\gamma_2 g_2 \in \aap(S)$ and, since by Remark \ref{gamma2n}, $\gamma_2= \beta_2$, it follows that $\omega_E= \gamma_2 g_2 + \lambda_3 g_3 + \lambda_4 g_4$. This fact implies $ \omega_E + g_2 \not \in \Gamma $. \\ %Thus, by definition of $\omega_D$, we have $ \gamma_3 g_3+\gamma_4 g_4 -g_2 = \sum_{i=2}^4 \lambda_i g_i \in \Gamma $ and hence $ (\gamma_3-\lambda_3) g_3+(\gamma_4-\lambda_4) g_4= (\lambda_2+1)g_2$ and this is impossible because they have to be both representations of the same order of an element of $\Gamma$, but $g_2 < g_3 < g_4$. \\
In the same way, using that $ \gamma_4 g_4 \in \aap(S) $ and $\gamma_4=\beta_4$, it is possible to show that $ \omega_E $ cannot be of the form $\omega_D - g_4 - \upsilon$ with $ \upsilon \in \Gamma $ and hence $ \omega_E = \omega_D - Cg_3 $ with $C= \ord(\omega_D)-\ord(\omega_E)$. \\
2. Follows immediately from item 1 by the definition of $\omega_D$. \\
3. Set $W:=\{ \omega \in \Gamma \mbox{ : } \omega + Cg_3 \not \in \Gamma \}$ and take $\omega \in W$. 
By item 1, we can write for each $ \omega \in \Gamma $, $ \omega^{\prime}= \omega_D - Cg_3 - \omega $.
Assuming $ \omega \in \aap(S) $, we also have $ \omega^{\prime} \in Ap(S) \subseteq \Gamma $. Therefore by definition, $ \omega + Cg_3 = \omega_D - \omega^{\prime} \in \Gamma$ and this is a contradiction since $ \omega \in W $. This proves $ W \subseteq \Gamma \setminus \aap(S)$, now we prove the reverse inclusion. 

Take now $ \omega \in \Gamma \setminus \aap(S) $. As said before, we have in this case $ \omega^{\prime}= \omega_E - \omega= \omega_D - Cg_3- \omega \not \in \aap(S)$. If we assume $ \omega \not \in W$, we have $ \omega + Cg_3 \in \Gamma $ and hence $ \omega^{\prime} \in \Gamma$. \\
Thus $ \omega^{\prime} \in \Gamma \setminus \aap(S) $ and, by definition of $\aap(S)$, for every $\overline{\omega} \in \Gamma$, $ \overline{\omega}+ \omega^{\prime} \not \in \aap(S) $. But certainly there must exist a minimal generator of the semigroup $g_j \in \aap(S)$ (with $j \neq 1$) such that $ \omega - g_j \in \Gamma $ and, setting $\overline{\omega}:= \omega - g_j$, we have $ \overline{\omega}+ \omega^{\prime}= \omega_D - Cg_3 - g_j = \omega_E - g_j \in \aap(S)$. Therefore we must have $ \omega \in W $.
\end{proof}

\begin{theorem}
\label{colon}
Assume with the same notations as before, $G=\bigoplus_{i = 0}^D G_i$ and $ A=\bigoplus_{i = 0}^{D-C} A_i $. Set $ h_2= \gamma_2- \mu_2 +1  $, $ h_3= \gamma_3 - C +1  $ and $ h_4= \gamma_4- \mu_4 +1  $ where $ \mu_2 $ and $ \mu_4 $ are given in Corollary \ref{tilde}. Thus the defining ideal of $A$ is $$ I= \widetilde{\, I \,} +  (z^{h_3}y^{h_2}, z^{h_3}w^{h_4}).$$
Moreover $$ A = \dfrac{G}{(0:_G z^C)} $$ and therefore it satisfies the Quotient Condition (Definition \ref{discesa}).
\end{theorem}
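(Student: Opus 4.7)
The plan is to work with the monomial $K$-basis of $G = K[y,z,w]/\widetilde{\, I \,}$. Since $G$ is a graded complete intersection cut out by three relations of degrees $\gamma_2+1$, $\gamma_3+1$, $\gamma_4+1$, its Hilbert series yields $\dim_K G = (\gamma_2+1)(\gamma_3+1)(\gamma_4+1) = |\Gamma|$ (the second equality being used inside the proof of Theorem \ref{mainCI}), and the natural $K$-basis of $G$ is $\mathcal{M} = \{y^a z^b w^c : 0 \leq a \leq \gamma_2,\ 0 \leq b \leq \gamma_3,\ 0 \leq c \leq \gamma_4\}$. The assignment $y^a z^b w^c \mapsto a g_2 + b g_3 + c g_4$ then gives a bijection $\mathcal{M} \to \Gamma$ that restricts to a bijection between the basis of $A$ and $\aap(S) \subseteq \Gamma$. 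Consequently the ideal $J = I/\widetilde{\, I \,}$ is spanned as a $K$-vector space by the basis monomials of $G$ corresponding to $\Gamma \setminus \aap(S)$.

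Next I would compute $(0:_G z^C)$ directly on this basis. Because $C \leq \gamma_3$ by Proposition \ref{GmenoA}(2), we have $b + C \leq 2\gamma_3$, so at most one application of the relation $z^{\gamma_3+1} = y^{\mu_2} w^{\mu_4}$ is needed to normalize $z^C \cdot y^a z^b w^c = y^a z^{b+C} w^c$ in $G$. A short case analysis shows that this product is nonzero in $G$ precisely when either $b + C \leq \gamma_3$, or $b + C > \gamma_3$ together with $a + \mu_2 \leq \gamma_2$ and $c + \mu_4 \leq \gamma_4$. Hence $(0:_G z^C)$ is the monomial ideal of $G$ with $K$-basis $\{y^a z^b w^c \in \mathcal{M} : b \geq h_3 \text{ and } (a \geq h_2 \text{ or } c \geq h_4)\}$, minimally generated by $z^{h_3} y^{h_2}$ and $z^{h_3} w^{h_4}$.

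To conclude $(0:_G z^C) = J$ I invoke Proposition \ref{GmenoA}(3): $\omega \in \Gamma \setminus \aap(S)$ iff $\omega + C g_3 \notin \Gamma$. Running the same case split on $\omega + C g_3 = a g_2 + (b+C) g_3 + c g_4$: if $b + C \leq \gamma_3$, then $(a, b+C, c)$ is a triple in the range of $\mathcal{M}$ representing $\omega + C g_3$; if $b + C > \gamma_3$, the only candidate triple obtainable via the substitution $(\gamma_3+1) g_3 = \mu_2 g_2 + \mu_4 g_4$ is $(a+\mu_2,\ b+C-\gamma_3-1,\ c+\mu_4)$, and since $\mathcal{M} \to \Gamma$ is a bijection this is the unique such representative, so $\omega + C g_3 \in \Gamma$ iff this candidate lies in the range. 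These conditions match exactly the nonvanishing criterion for $z^C \cdot y^a z^b w^c$, so $(0:_G z^C)$ and $J$ have the same $K$-basis in $G$ and are therefore equal as ideals. This yields both $I = \widetilde{\, I \,} + (z^{h_3}y^{h_2}, z^{h_3}w^{h_4})$ and $A \cong G/(0:_G z^C)$; since $G$ is a graded Artinian complete intersection of codimension $3$, it has the WLP by the result recalled in Section 1, so $A$ satisfies the Quotient Condition of Definition \ref{discesa}.

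The main obstacle is the matching in the third step: the uniqueness of representatives in $\mathcal{M}$ (coming from $\dim_K G = |\Gamma|$) is what rules out any third unforeseen triple representing $\omega + C g_3$, and is what aligns the $G$-side computation one-to-one with the semigroup-theoretic characterization of $\Gamma \setminus \aap(S)$ provided by Proposition \ref{GmenoA}(3).
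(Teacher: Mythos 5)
Your proof follows essentially the same route as the paper's: both identify $I/\widetilde{\, I \,}$ with the span of the box monomials corresponding to $\Gamma \setminus \aap(S)$, invoke Proposition \ref{GmenoA}(3), and translate the condition $\omega + Cg_3 \notin \Gamma$ into the exponent inequalities $b \geq h_3$ and ($a \geq h_2$ or $c \geq h_4$) via a single application of the relation $z^{\gamma_3+1} = y^{\mu_2}w^{\mu_4}$ (legitimate since $C \leq \gamma_3$); you merely organize the computation from the $(0:_G z^C)$ side first rather than from the $\Gamma \setminus \aap(S)$ side. The one step you justify by bijectivity of $\mathcal{M} \to \Gamma$ --- that no box representative of $\omega + Cg_3$ other than the substitution candidate can exist --- is precisely the point the paper itself passes over quickly (its bare assertion that $(\gamma_2+1)g_2 + \mu_4 g_4 \notin \Gamma$), so your argument is at the same level of rigor as the original.
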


\begin{proof}
We have seen that $A$ is isomorphic to $G$ modulo the ideal $J:= \widetilde{\, I \,}/I.$ By construction of these two algebras, the ideal $J$ is generated by the elements $ \{ x^{\lambda} \ | \sum_{i = 2}^4 \lambda_i g_i \in \Gamma \setminus \aap(S) \mbox{ and } \lambda \mbox{ is maximal } \} $. \\
Hence we need to show that $ y^{h_2} z^{h_3}$ and $ z^{h_3} w^{h_4} $ are the unique monomial representations of the minimal elements of $ \Gamma \setminus \aap(S) $ with respect to the standard partial order of $\Gamma \subseteq \N^3$. \\
Take $ \omega= \lambda_2 g_2 + \lambda_3 g_3 + \lambda_4 g_4 \in \Gamma \setminus \aap(S) $, thus by Proposition \ref{GmenoA} $ \omega + Cg_3 \not \in \Gamma $ and hence $ \lambda_3 + C \geq \gamma_3 +1 $. Moreover, to be outside of $\Gamma$ we need either $ \lambda_2 \geq \gamma_2 - \mu_2 +1$ or $ \lambda_4 \geq \gamma_4 - \mu_4 +1$. Indeed $ Cg_3 +  h_2 g_2 + h_3 g_3= Cg_3 + (\gamma_2- \mu_2 +1)g_2 + (\gamma_3-C+1) g_3= (\gamma_3+1) g_3 + (\gamma_2- \mu_2 +1)g_2= \mu_2 g_2 + \mu_4 g_4 + (\gamma_2- \mu_2 +1)g_2= \mu_4 g_4 + (\gamma_2+1)g_2 \not \in \Gamma$. Similarly we obtain $ Cg_3 +  h_4 g_4 + h_3 g_3 = \mu_2 g_2 + (\gamma_4+1)g_4 \not \in \Gamma$. \\
Therefore the minimal elements of $ \Gamma \setminus \aap(S) $ are $ h_2 g_2 + (\gamma_3-C+1) g_3 $ and $ h_4 g_4 + (\gamma_3-C+1) g_3 $ and this completes the proof of the first part of the Theorem. \\
For the "moreover" statement notice that, taking $f \in G$ homogeneous, then $f \in (0:_G z^C)$ if and only if $z^Cf \in \widetilde{\, I \,}$ and this happens if and only if the monomials of $f$ are corresponding to element of $\Gamma \setminus \aap(S)$, i.e., $f \in (z^{h_3}y^{h_2}, z^{h_3}w^{h_4})$.
\end{proof}

%Using the characterization of the defining ideal of $A$ given in Theorem \ref{colon}, we derive as a corollary that $A$ satisfies the Quotient Condition (Definition \ref{discesa}). 
Theorem \ref{colon} also shows that the ideal $ (0:_G z^C) $ defining $A$ as a quotient of $G$ is a monomial ideal generated by $z^{h_3}y^{h_2}$ and $ z^{h_3}w^{h_4}$.

%\begin{corollary}
%\label{teo1}
%Let $S = \langle g_1, g_2, g_3, g_4 \rangle$ be an $M$-pure symmetric numerical semigroup generated by $4$ elements. Then the ring $A$ associated to $\aap(S)$ satisfies the Quotient Condition.
%\end{corollary}

%\begin{proof}
%By Theorem \ref{mainCI} if $\aap(S) = \Gamma$, the ring $A$ is a codimension $3$ Complete Intersection and therefore it has the WLP and satisfies the Quotient Condition with $C=0$. Using the notation of this Section, we call $z:=x_3$ and we assume $ \aap(S) \subsetneq \Gamma $. \\
%In this case, by Theorem \ref{colon} there exists a Complete Intersection Artinian standard graded algebra $G$ such that $ A \cong \dfrac{G}{(0:_G z^C)} $ for $1 \leq C \leq \gamma_3$. \\
%Now we can conclude applying Theorem \ref{main1} inductively on $ C $ as done in the proof of Theorem \ref{teo2}.
%\end{proof}

\bigskip

\section*{Acknowledgements}
The author wishes to thank Professor Akihito Wachi for providing him Example \ref{wachi}, showing that the passage of the WLP to a quotient by a colon ideal may in general fail. The author also wishes to thank his PhD advisor Professor, Marco D'Anna for his guidance during the preparation of this paper. 

%\clearpage
\addcontentsline{toc}{chapter}{Bibliografia}

\end{document}